\definecolor{dmagenta}{rgb}{.4,.1,.5}
\definecolor{dblue}{rgb}{.0,.0,.5}
\definecolor{mblue}{rgb}{.0,.0,.8}
\definecolor{ddblue}{rgb}{.0,.0,.4}
\definecolor{dred}{rgb}{.6,.0,.0}
\definecolor{dgreen}{rgb}{.0,.5,.0}
\definecolor{Eeom}{rgb}{.0,.0,.5}
\newtheorem{lemma}{Lemma}[section]
\newtheorem{theorem}{Theorem}[section]
\newtheorem{proposition}{Proposition}[section]
\theoremstyle{definition}
\newtheorem{remark}{Remark}[section]
\numberwithin{equation}{section}
\crefname{section}{Section}{Sections}
\crefname{subsection}{Subsection}{Subsections}
\crefname{condition}{Condition}{Conditions}
\crefname{hypothesis}{Hypothesis}{Conditions}
\crefname{assumption}{Assumption}{Assumptions}
\crefname{lemma}{Lemma}{Lemmas}
\crefname{claim}{Claim}{Claims}
\Crefname{figure}{Figure}{Figures}
\newcommand{\refcheckize}[1]{%
  \expandafter\let\csname @@\string#1\endcsname#1%
  \expandafter\DeclareRobustCommand\csname relax\string#1\endcsname[1]{%
    \csname @@\string#1\endcsname{##1}\@for\@temp:=##1\do{\wrtusdrf{\@temp}%
\wrtusdrf{{\@temp}}}}%
  \expandafter\let\expandafter#1\csname relax\string#1\endcsname}
\newcommand{\refcheckizetwo}[1]{%
  \expandafter\let\csname @@\string#1\endcsname#1%
  \expandafter\DeclareRobustCommand\csname relax\string#1\endcsname[2]{%
    \csname @@\string#1\endcsname{##1}{##2}\wrtusdrf{##1}\wrtusdrf{{##1}}%
\wrtusdrf{##2}\wrtusdrf{{##2}}}%
  \expandafter\let\expandafter#1\csname relax\string#1\endcsname}
\DeclareRobustCommand\widecheck[1]{{\mathpalette\@widecheck{#1}}}
\def\@widecheck#1#2{%
    \setbox\z@\hbox{\m@th$#1#2$}%
    \setbox\tw@\hbox{\m@th$#1%
       \widehat{%
          \vrule\@width\z@\@height\ht\z@
          \vrule\@height\z@\@width\wd\z@}$}%
    \dp\tw@-\ht\z@
    \@tempdima\ht\z@ \advance\@tempdima2\ht\tw@ \divide\@tempdima\thr@@
    \setbox\tw@\hbox{%
       \raise\@tempdima\hbox{\scalebox{1}[-1]{\lower\@tempdima\box
\tw@}}}%
    {\ooalign{\box\tw@ \cr \box\z@}}}
\def\subsection{\@startsection{subsection}{0}%
\z@{\linespacing\@plus\linespacing}{\linespacing}%
{\bf}}
\DeclareMathOperator{\Exp}{\mathbb{E}} %Expectation
\DeclareMathOperator{\Prob}{\mathbb{P}} %Probability
\newcommand{\D}{\mathrm{d}}          %differential
\newcommand{\RR}{\mathbb{R}}         %Real numbers
\newcommand{\Rd}{{\mathbb{R}^d}}       %R^d
\newcommand{\NN}{\mathbb{N}}         %Natural numbers
\newcommand{\Ind}{\mathds{1}}            %indicator function
\newcommand{\grad}{\nabla}
\newcommand{\cA}{\mathcal{A}}
\newcommand{\sB}{\mathscr{B}}    % Ball used often
\newcommand{\cB}{\mathrm{B}}
\newcommand{\cK}{\mathrm{K}}
\newcommand{\cC}{\mathcal{C}}     % Classes of continuous functions
\newcommand{\cD}{\mathrm{D}}     % Domain of \Rd
\newcommand{\cU}{\mathrm{U}}
\newcommand{\cN}{\mathrm{N}}
\newcommand{\cF}{\mathcal{F}}
\newcommand{\sF}{\mathfrak{F}}    % sigma field
\newcommand{\sM}{\mathscr{M}}     % Ergodic occupation measures
\newcommand{\cO}{\mathcal{O}}
\newcommand{\cS}{\mathcal{S}}
\newcommand{\cT}{\mathcal{T}}
\newcommand{\abs}[1]{\lvert#1\rvert}
\newcommand{\norm}[1]{\lVert#1\rVert}
\newcommand{\sgn}{\text{sgn}}
\providecommand{\pro}[1]{(#1_t)_{t \geq 0}}
\providecommand{\seq}[1]{(#1_n)_{n\in \mathbb{N}}}
\newcommand{\fL}{(-\Delta)^{s}}
\DeclareMathOperator{\dist}{dist}
\DeclareMathOperator{\Deg}{\mathrm{deg}}
\begin{document}

\title[Fractional Ambrosetti-Prodi results]%
{\sc \textbf{Ambrosetti-Prodi type results for Dirichlet problems of fractional Laplacian-like operators}}

\author{Anup Biswas and J\'{o}zsef L\H{o}rinczi}

\address{Anup Biswas \\
Department of Mathematics, Indian Institute of Science Education and Research, Dr. Homi Bhabha Road,
Pune 411008, India, anup@iiserpune.ac.in}

\address{J\'ozsef L\H{o}rinczi \\
Department of Mathematical Sciences, Loughborough University, Loughborough LE11 3TU, United Kingdom,
J.Lorinczi@lboro.ac.uk}

\date{}

%%%%%%%%%%%%%%%%%%%%%%%%%%%%%%%%%%%%%%%%%%%%%%%%%%%%%%%%%%%%%%%%%%%%%%%%%%%%%%%%

\begin{abstract}
We establish Ambrosetti--Prodi type results for viscosity and classical solutions of nonlinear Dirichlet problems
for the fractional Laplace and comparable operators. In the choice of nonlinearities we consider semi-linear and 
super-linear growth cases separately. We develop a new technique using a functional integration-based approach, 
which is more robust in the non-local context than a purely analytic treatment.
\end{abstract}
\keywords{Semi-linear nonlocal exterior value problem, Ambrosetti-Prodi problem, viscosity solutions, bifurcations,
fractional Schr\"{o}dinger operator, principal eigenvalues, maximum principles.}
\subjclass[2000]{35J60, 35J55, 58J55}

\maketitle

\section{Introduction and statement of results}
In this paper our goal is to present a counterpart for the fractional Laplacian and operators comparable in a specific 
sense, of the classical Ambrosetti-Prodi problem
studied for a class of elliptic differential operators with nonlinear terms. In contrast with topological and variational
methods used in the classical context, we propose a new technique based on a path integration approach, which accommodates
a large class of non-local operators going well beyond the fractional Laplacian, and also applies to viscosity solutions.
This larger class is motivated by a number of applications including operators with L\'evy jump kernels having a lighter
than polynomial tail, however, an extension to this class requires a number of extra steps and concepts, which will be
pursued in a future work. Apart from this, another general advantage of our approach seems to be that it is more robust
than purely analytic techniques, dealing better with the difficulties resulting from boundary roughness. Our techniques
and framework have been developed recently in \cite{BL17a,BL17b}, to which we now intend to add the new dimension of
including nonlinearities. First we briefly recall the original problem, then state our results, and in the next section
present the proofs.

Let $\cD \subset \Rd$ be a bounded open domain with a $\cC^{2,\alpha}(\cD)$ boundary, $\alpha \in (0,1)$, and  consider
the Dirichlet problem
\begin{equation}
\label{APeq}
\left\{\begin{array}{ll}
\Delta u + f(u) = g(x) \quad \text{in}\, \; \cD\,, \\
\qquad \qquad u = 0\, \;\qquad \text{on}\,\; \partial \cD,
\end{array} \right.
\end{equation}
where $\Delta$ is the Laplacian, $f\in \cC^2(\RR)$, and $g \in \cC^{0,\alpha}(\bar\cD)$. In the pioneering paper \cite{AP72}
Ambrosetti and Prodi studied the operator $L = \Delta + f(\cdot)$ as a differentiable map between $C^{2,\alpha}(\bar\cD)$
and $\cC^{0,\alpha}(\bar\cD)$, and discovered the following phenomenon. Let $\lambda_1 < \lambda_2  \leq \lambda_3 \leq ...$
denote the Dirichlet eigenvalues of the Laplacian for the domain $\cD$. The authors have shown that provided $f$ is strictly
convex, with $f(0)=0$, and
\begin{equation}
\label{APcondi}
0 < \lim_{z\to -\infty} \frac{f(z)}{z} < \lambda_1 < \lim_{z\to\infty} \frac{f(z)}{z} < \lambda_2,
\end{equation}
then
\begin{enumerate}
\item[(1)]
there is a closed connected manifold $\sM_1 \subset \cC^{0,\alpha}(\bar\cD)$ of codimension 1, with the
property that there exist $\sM_0, \sM_2$ such that $\cC^{0,\alpha}(\bar\cD) \setminus \sM_1 = \sM_0 \sqcup \sM_2$,
\item[(2)]
the Dirichlet problem \eqref{APeq} has no solution if $g \in \sM_0$, has a unique solution if $g \in \sM_1$, and
has exactly two solutions if $g \in \sM_2$.
\end{enumerate}
The problem formulates in the wider context of invertibility of differentiable maps between Banach spaces,
in fact, $\sM_1$ is the set of elements $u$ on which the Fr\'echet derivative of $L$ is not locally invertible. Also, as
it is seen from condition \eqref{APcondi}, this split behaviour shows that the existence and multiplicity of
solutions is conditioned by the crossing of the nonlinear term with the principal eigenvalue of the linear part.

Following this fundamental observation, much work has been done in the direction of relaxing the conditions or
generalizing to further non-linear partial differential equations or systems. A first contribution has been made
by Berger and Podolak proposing a useful reformulation of the problem. Write
$$
L_1u = \Delta u + \lambda_1 u, \quad f_1(u) = f(u) - \lambda_1u, \quad g = \rho \varphi_1 + h,
$$
where $\varphi_1$ is the principal eigenfunction of the Dirichlet Laplacian, $h$ is in the orthogonal complement
of $\varphi_1$, $L^2$-normalized to 1, and $\rho \in \RR$, so that \eqref{APeq} becomes
\begin{equation}
\label{APequiv}
L_1u + f_1u = \rho \varphi_1 + h  \;\; \mbox{in} \;\; \cD, \quad u = 0 \;\; \mbox{in} \;\; \partial\cD.
\end{equation}
In \cite{BP75} it is then shown that there exists $\rho^*(h) \in \RR$, continuously dependent on $h$, such
that for $\rho > \rho^*(h)$ the equivalent Dirichlet problem has no solution, for $\rho = \rho^*(h)$ it has a
unique solution, and for $\rho < \rho^*(h)$ it has exactly two solutions. For further early developments we
refer to the works of Kazdan and Warner \cite{KW75} relaxing the assumptions, Dancer \cite{D78}, Amann and Hess
\cite{AH79} identifying a suitable growth condition on $f_1$, and Ruf and Srikanth \cite{RS86} turning to the
super-linear case. More recent papers exploring different perspectives include \cite{B81,FS84,MRZ,DFS,CTZ,FQS,SZ18,S14},
and for useful surveys we refer to de Figueiredo \cite{DF80} and Mahwin \cite{M87}. For non-local Hamilton-Jacobi
equations see \cite{DQT}, and for systems of non-local equations \cite{P17}.

Let $\cD \subset \Rd$ be a bounded domain with $\cC^2$ boundary, $s\in(0,1)$, and consider the fractional Laplacian-like
operator
$$L u(x)=\int_{\Rd} (u(x+y)-u(x)-\grad u(x)\cdot y\Ind_{\{\abs{y}\leq 1\}}) \frac{k(y/|y|)}{|y|^{d+2s}}\, dy,$$
where $k:\mathbb{S}^{d-1}\to(0, \infty)$ is a symmetric (i.e., $k(z)=k(-z)$), Borel-measurable function satisfying the
non-degeneracy  condition
$$
0< \Lambda_1\leq k(z)\leq \Lambda_2, \quad \text{for all}\; z\in \mathbb{S}^{d-1}.
$$
As it can be seen from Lemma~\ref{L3.8} below, our proof techniques use fine boundary behaviour of the solutions 
of the Dirichlet problem. It is known from \cite{RS16} that such a behaviour may not hold for a general non-degenerate 
kernel $k$ defined on $\Rd$. This is the reason why we restrict ourselves to the kernel functions of above type.

Motivated by the problem \eqref{APequiv}, in this paper we are interested in the existence and multiplicity
of solutions of
\begin{equation}\label{E-AP}
\tag{$P_\rho$}
%\begin{split}
\left\{\begin{array}{ll}
Lu + f(x, u) + \rho \Phi_1 + h(x) = 0\, \quad \text{in}\, \; \cD, \\
\qquad \qquad \qquad \qquad \qquad \quad u = 0\,  \quad \text{in} \, \;  \cD^c,
 \end{array} \right.
%\end{split}
\end{equation}
%\begin{equation}\label{E-AP}
%\fL u = f(x, u) + t \Phi_1 + h(x) \quad \text{in}\; \cD, \quad \text{and} \quad u=0\quad \text{in}\; \cD^c,\tag{$P_t$}
%\end{equation}
where $\Phi_1$ is the Dirichlet principal eigenfunction of $L$ in $\cD$, $\rho\in\RR$, and $h\in\cC^\alpha(\bar\cD)$
for some $\alpha>0$. We also assume that $\norm{\Phi_1}_\infty=1$. Below we will consider viscosity solutions, however,
we will also discuss a sufficient condition on $f$ so that every viscosity solution becomes a classical solution.

Let $V\in \cC(\bar\cD)$, which will be referred to as a potential. We use the notation $\cC_{\rm b, +}(\Rd)$ for the
space of non-negative bounded continuous functions on $\Rd$. Also, we denote by $\cC^{2s+}(\cD)$ the space of continuous
functions on $\cD$ with the property that if $\psi\in \cC^{2s+}(\cD)$, then for every compact subset $\cK \subset \cD$
there exists $\gamma>0$ with $f\in \cC^{2s+\gamma}(\cK)$. Define
$$
\sF(\lambda, \cD) = \left\{\psi\in \cC_{\rm b, +}(\Rd)\cap\cC^{2s+}(\cD)\; :\; \psi>0\; \text{in}\; \cD, \; \text{and} \;
L\psi - V\psi + \lambda\psi \leq 0\right\}.
$$
The principal eigenvalue of $-L+V$ is defined as
\begin{equation}\label{E1.4}
\lambda^*(-L+V)=\sup\left\{\lambda\; :\; \sF(\lambda, \cD)\neq\emptyset\right\}.
\end{equation}
For easing the notation, we will simply write $\lambda^*$ for the above. This widely used characterization of the principal
eigenvalue originates from the seminal work of Berestycki, Nirenberg and Varadhan \cite{BNV}. Descriptions in a similar
spirit for a different class of non-local Schr\"{o}dinger operators have been obtained in \cite{BCV}, while in \cite{B18,DQT,SQX}
non-local Pucci operators have been considered. Recently, we proposed in \cite{BL17b} a probabilistic approach using a
Feynman-Kac representation to establish characterizations of the principal eigenvalue and the corresponding semigroup solutions.

Our first result concerns the existence of the principal eigenfunction and of a solution of the Dirichlet problem.
\begin{theorem}\label{T2.1}
Suppose that $V, g\in \cC^\alpha(\bar\cD)$ for some $\alpha>0$. The following hold:
\begin{itemize}
\item[(a)]
There exists a unique $\Psi_1\in\cC^{2s+}(\cD)\cap\cC_{\rm b, +}(\Rd)$, $\norm{\Psi_1}_\infty =1$, satisfying
\begin{equation}
\label{ET2.1A}
-L\Psi_1 + V\Psi_1 = \lambda^*\Psi_1 \;\; \mbox{in}\;\; \cD, \quad \Psi_1>0 \;\; \mbox{in}\;\; \cD, \quad
\Psi_1=0 \;\; \mbox{in}\;\; \cD^c.
\end{equation}
\item[(b)]
Suppose $\lambda^*>0$. Then there exists a unique $u\in \cC^{2s+}(\cD)\cap\cC(\Rd)$ satisfying
\begin{equation}
\label{ET2.1B}
-L u + Vu = g \;\; \mbox{in}\;\; \cD,  \quad u=0 \;\; \mbox{in}\;\; \cD^c.
\end{equation}
\end{itemize}
\end{theorem}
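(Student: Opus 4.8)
The plan is to realize both $\Psi_1$ and $u$ through the Feynman--Kac semigroup of the killed stable process. Let $\pro{X}$ be the rotationally symmetric $2s$-stable process on $\Rd$, which is generated by $-\fL$, let $\tau_\cD$ be its first exit time from $\cD$, and set
\begin{equation*}
T_t^\cD\phi(x)\df \Exp^{x}\bigl[\E^{-\int_0^t V(X_r)\,\D r}\,\phi(X_t)\,\Ind_{\{t<\tau_\cD\}}\bigr],\qquad x\in\Rd,\ t\geq 0 .
\end{equation*}
Since $\cD$ is bounded with $\cC^2$ boundary, every point of $\partial\cD$ is regular for $\pro{X}$ and the killed process has bounded continuous transition densities for $t>0$; hence $\semi{T^\cD}$ is a strongly continuous, compact, positivity-improving semigroup on $\cC_0(\cD)$ (and on $L^2(\cD)$) associated with $\fL+V$ on $\cD$ under the exterior Dirichlet condition. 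This is the framework of \cite{BL17b}, from which I would draw the bulk of part~(a).

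\textbf{Part (a).} By the Krein--Rutman / Jentzsch theorem, $T_t^\cD$ has a simple, isolated, dominant eigenvalue, which I write $\E^{-\Lambda t}$, with a strictly positive eigenfunction $\Psi_1$; normalizing $\norm{\Psi_1}_\infty=1$ pins it down, and simplicity of the eigenvalue gives uniqueness with the stated sign and normalization. By construction $\Psi_1\in\cC_{\rm b,+}(\Rd)$ with $\Psi_1=0$ in $\cD^c$, and $T_t^\cD\Psi_1=\E^{-\Lambda t}\Psi_1$ means $\Psi_1$ solves $\fL\Psi_1+V\Psi_1=\Lambda\Psi_1$ in the viscosity sense; interior regularity for $\fL$ together with $V\in\cC^\alpha(\bar\cD)$ bootstraps this to $\Psi_1\in\cC^{2s+}(\cD)$, so that \eqref{ET2.1A} holds classically. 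It remains to identify $\Lambda$ with $\lambda^*$. Since $\Psi_1\in\sF(\Lambda,\cD)$, we get $\lambda^*\geq\Lambda$. Conversely, if $\psi\in\sF(\lambda,\cD)$ then, because $\fL\psi+V\psi\geq\lambda\psi$ in $\cD$ and $\psi\geq0$ everywhere, the process $\E^{\lambda(t\wedge\tau_\cD)}\E^{-\int_0^{t\wedge\tau_\cD}V(X_r)\D r}\psi(X_{t\wedge\tau_\cD})$ is a supermartingale, whence $T_t^\cD\psi\leq\E^{-\lambda t}\psi$ in $\cD$; pairing this with the dominant eigenfunction of the adjoint semigroup and using $\psi>0$ in $\cD$ gives $\E^{-\Lambda t}\leq\E^{-\lambda t}$ for all $t>0$, i.e.\ $\Lambda\geq\lambda$. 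Taking the supremum over admissible $\lambda$ yields $\Lambda\geq\lambda^*$, so $\Lambda=\lambda^*$.

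\textbf{Part (b).} Assume $\lambda^*>0$, so the spectral radius of $T_t^\cD$ equals $\E^{-\lambda^*t}<1$. I would define
\begin{equation*}
u(x)\df\int_0^\infty T_t^\cD g(x)\,\D t=\Exp^{x}\Bigl[\int_0^{\tau_\cD}\E^{-\int_0^t V(X_r)\,\D r}\,g(X_t)\,\D t\Bigr].
\end{equation*}
The integral converges in $\norm{\cdot}_\infty$ once one has the decay $\norm{T_t^\cD}_{\infty\to\infty}\leq C\E^{-\lambda^*t}$ for $t\geq1$: one step of the semigroup gives $T_1^\cD\Ind_\cD\leq C\Psi_1$ by the Hopf-type boundary decay of $\Psi_1$, so $T_t^\cD\Ind_\cD=T_{t-1}^\cD\bigl(T_1^\cD\Ind_\cD\bigr)\leq C\E^{-\lambda^*(t-1)}\Psi_1$; for $t\leq1$ one simply has $\norm{T_t^\cD}_{\infty\to\infty}\leq\E^{t\norm{V^-}_\infty}$. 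The resolvent identity $T_h^\cD u-u=-\int_0^h T_t^\cD g\,\D t$ then identifies $u$ as the (mild, hence viscosity) solution of $\fL u+Vu=g$ in $\cD$, and $u=0$ in $\cD^c$ because $\tau_\cD=0$ there; interior regularity plus $\cC^2$-regularity of $\partial\cD$ give $u\in\cC^{2s+}(\cD)\cap\cC(\Rd)$, so the equation holds classically. For uniqueness, the difference $w$ of two solutions satisfies $\fL w+Vw=0$ with $w=0$ in $\cD^c$; since $\lambda^*>0$ the gauge $\Exp^{x}[\E^{-\int_0^{\tau_\cD}V(X_r)\D r}]$ is finite, so $\E^{-\int_0^{t\wedge\tau_\cD}V(X_r)\D r}w(X_{t\wedge\tau_\cD})$ is a uniformly integrable martingale converging to $0$ almost surely, forcing $w\equiv0$; equivalently, $\lambda^*>0$ is precisely the condition granting the (refined) maximum principle for $\fL+V$ on $\cD$.

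\textbf{Where the difficulty sits.} The main obstacle is the quantitative bound $\norm{T_t^\cD}_{\infty\to\infty}\lesssim\E^{-\lambda^*t}$ used in part~(b): it couples the boundary behaviour of $\Psi_1$ with the strict sign condition $\lambda^*>0$, and it is what turns the bare Feynman--Kac formula into a genuine solution with the correct regularity up to the non-local exterior condition. In part~(a) the delicate step is the identification $\Lambda=\lambda^*$, i.e.\ matching the probabilistic dominant rate with the analytically defined quantity $\lambda^*=\sup\{\lambda:\sF(\lambda,\cD)\neq\emptyset\}$; this is where the supersolution estimate for $\semi{T^\cD}$ and positivity of the adjoint eigenfunction come in.
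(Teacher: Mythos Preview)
Your approach to part~(a) is essentially the paper's: both extract the eigenpair from the Feynman--Kac semigroup (Krein--Rutman / compactness), upgrade the viscosity solution to $\cC^{2s+}(\cD)$ via the interior regularity of \cite{RS14}, and then identify the semigroup principal eigenvalue with the variational quantity $\lambda^*$. Your identification step pairs the supermartingale bound $T_t^\cD\psi\leq\E^{-\lambda t}\psi$ against the adjoint principal eigenfunction; the paper instead quotes \cite[Prop.~4.1]{BL17b}, but the content is the same.

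For part~(b) the routes genuinely diverge. You write the solution directly as the resolvent $u=\int_0^\infty T_t^\cD g\,\D t$ and justify convergence via the uniform decay $\norm{T_t^\cD}_{\infty\to\infty}\lesssim\E^{-\lambda^* t}$, obtained from a one-step intrinsic-ultracontractivity estimate $T_1^\cD\Ind_\cD\leq C\Psi_1$. The paper instead runs a Leray--Schauder fixed point argument: it freezes $\psi$ and solves $\fL\varphi=g-V\psi$ to define a compact map $\cT$ on $\cC_0(\cD)$, then shows the set $\{\varphi=\mu\cT\varphi:\mu\in[0,1]\}$ is bounded. The needed a~priori bound comes from the same exponential decay you use, but the paper derives it from the concavity of $\mu\mapsto\lambda^*_{\mu V}$ and the characterization $\lambda^*=-\lim_{t\to\infty} t^{-1}\log\Exp^x[\E^{-\int_0^t V}\Ind_{\{\uptau_\cD>t\}}]$ from \cite[Cor.~4.1]{BL17b}, rather than from a Hopf comparison with $\Psi_1$. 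Your route is shorter and more natural for a linear problem; the paper's fixed-point packaging, while circuitous here, yields the estimate uniformly over the whole homotopy family $\mu V$, $\mu\in[0,1]$, which is consonant with the degree-theoretic machinery used later in the paper. Both proofs ultimately rest on the same spectral-gap input $\lambda^*>0$, and both defer uniqueness to the maximum principle (the paper cites \cite[Th.~4.2]{BL17b}; your martingale/gauge argument is the probabilistic version of the same fact).
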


We will also need the following refined weak maximum principle for viscosity solutions.
\begin{theorem}\label{T2.2}
Suppose that $V\in\cC^\alpha(\bar\cD)$ and $\lambda^*>0$. Let $u\in\cC_{\rm b}(\Rd)$ be a viscosity subsolution of
$-L u + Vu\leq 0$ and $v\in\cC_{\rm b}(\Rd)$ be a viscosity supersolution of  $-L v + V v\geq 0$ in $\cD$.
Furthermore, assume that $u\leq v$ in $\cD^c$. Then $u\leq v$ in $\Rd$.
\end{theorem}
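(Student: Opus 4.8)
The plan is to argue by contradiction, using the positivity of $\lambda^*$ to produce a strictly positive supersolution that does not vanish on $\bar\cD$, and then to slide it against $u-v$ until it generates an interior contact point at which the two viscosity inequalities can be combined. The first step is the construction of this supersolution. I would pick a bounded domain $\cD'$ with $\cC^2$ boundary, $\bar\cD\subset\cD'$, as close to $\cD$ as desired, and extend $V$ to a $\cC^\alpha$ function on $\bar\cD'$. Since $\cD'\mapsto\lambda^*(\fL+V,\cD')$ is non-increasing under enlargement of the domain and continuous with respect to it (properties of the principal eigenvalue underlying \cref{T2.1}; cf.\ \cite{BL17b}), for $\cD'$ close enough to $\cD$ one still has $\lambda':=\lambda^*(\fL+V,\cD')>0$. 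Applying \cref{T2.1}(a) on $\cD'$ then gives $\psi\in\cC^{2s+}(\cD')\cap\cC_{\rm b,+}(\Rd)$ with $\psi>0$ in $\cD'$ and $\fL\psi+V\psi=\lambda'\psi$ in $\cD'$; since $\psi$ is continuous and strictly positive on the compact set $\bar\cD\subset\cD'$, the number $\kappa:=\min_{\bar\cD}\psi$ is positive, and $\fL\psi+V\psi=\lambda'\psi\ge\lambda'\kappa>0$ on $\cD$.

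Next I would run the sliding argument. Suppose, for contradiction, that $M:=\sup_{\Rd}(u-v)>0$. Because $u\le v$ on $\cD^c\supset\partial\cD$, $\psi\ge0$ on $\Rd$, and $\psi\ge\kappa$ on $\bar\cD$, the inequality $u\le v+t\psi$ holds on all of $\Rd$ once $t>M/\kappa$, so $t^*:=\inf\{t\ge0:\,u\le v+t\psi\ \text{on}\ \Rd\}$ is finite, and $t^*>0$ since $M>0$ rules out $u\le v$. By continuity and boundedness, $u\le v+t^*\psi$ on $\Rd$ with $\sup_{\Rd}(u-v-t^*\psi)=0$; comparing with $t^*-\varepsilon$ and using $u\le v$ on $\cD^c$ shows that this supremum is in fact attained on the compact set $\bar\cD$, at a point that cannot lie on $\partial\cD$ (there $u-v-t^*\psi\le-t^*\kappa<0$). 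Hence there is a point $x_0\in\cD$ with $(u-t^*\psi)(x_0)=v(x_0)$, while $u-t^*\psi\le v$ on all of $\Rd$.

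For the contradiction, put $\tilde u:=u-t^*\psi$. Adding the function $-t^*\psi$, which is $\cC^{2s+}$ near $x_0$, to the viscosity subsolution $u$ and using the linearity of $\fL$, I would check that $\tilde u$ is a viscosity subsolution of $\fL\tilde u+V\tilde u\le-t^*(\fL\psi+V\psi)\le-t^*\lambda'\psi$ in $\cD$; in particular the right-hand side is $\le-c$ near $x_0$, with $c:=t^*\lambda'\kappa>0$. Thus $\tilde u\le v$ on $\Rd$ with $\tilde u(x_0)=v(x_0)$ at the interior point $x_0\in\cD$, $\tilde u$ a viscosity subsolution of $\fL\tilde u+V\tilde u\le-c$ near $x_0$, and $v$ a viscosity supersolution of $\fL v+Vv\ge0$. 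Regularising $\tilde u$ and $v$ by sup- and inf-convolutions — or, equivalently, invoking the Jensen--Ishii lemma for integro-differential operators — one obtains, at points converging to $x_0$, a common $\cC^2$ test function that touches $\tilde u$ from above and $v$ from below; inserting it into the two viscosity inequalities and subtracting (the zeroth-order terms cancel because $\tilde u(x_0)=v(x_0)$, the local second-order terms because the test function is common) leaves, up to errors that vanish in the limit,
\[
c\;\le\;\fL\bigl[(v-\tilde u)\chi_{B_\delta(x_0)^c}\bigr](x_0)\;=\;-C_{d,s}\int_{B_\delta(x_0)^c}\frac{(v-\tilde u)(z)}{\abs{x_0-z}^{d+2s}}\,\D z\;\le\;0,
\]
since $v-\tilde u\ge0$ on $\Rd$ and $(v-\tilde u)(x_0)=0$. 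This contradiction forces $\sup_{\Rd}(u-v)\le0$, i.e.\ $u\le v$ in $\Rd$.

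The step I expect to be the main obstacle is the last one: combining the viscosity inequalities for the merely continuous $u$ and $v$ rigorously, which requires either running the sup/inf-convolution regularisation with care (controlling the non-local tails of $\fL$ under the regularisation — this is precisely where the boundedness of $u,v$ is used) or quoting a non-local Jensen--Ishii lemma in a form compatible with the notion of viscosity solution adopted here. A secondary point needing care is the continuity of $\lambda^*(\fL+V,\cdot)$ under the small enlargement $\cD\subset\cD'$: if that is not available off the shelf it must be established separately, for instance through the Feynman--Kac characterisation of the principal eigenvalue.
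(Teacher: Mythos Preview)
Your overall strategy coincides with the paper's: enlarge the domain slightly to obtain a principal eigenfunction that stays strictly positive on $\bar\cD$, and then slide a multiple of it against $u-v$ until an interior contact point $x_0$ appears. The domain-continuity of $\lambda^*$ that you flag as a secondary concern is exactly what the paper records as Proposition~\ref{P3.1}, proved via the Feynman--Kac representation, so that step is available off the shelf.

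The genuine difference is in how the contradiction is extracted at $x_0$. You keep $\tilde u=u-t^*\psi$ and $v$ separate and plan to invoke a non-local Jensen--Ishii lemma (or sup/inf-convolution regularisation) to produce a common test function for two merely continuous functions; this is correct but, as you yourself note, is the delicate part and requires controlling the nonlocal tails under regularisation. The paper sidesteps this entirely: \emph{before} sliding, it quotes \cite[Th.~5.9]{CS09} to conclude that $w:=u-v$ is already a viscosity subsolution of $\fL w+Vw\le0$ in $\cD$. Then $c_0\Psi_1$ touches the single function $w$ from above at $x_0$, and because $\Psi_1\in\cC^{2s+}(\cD_1)$ it can be used directly as a test function once one checks (this is the paper's Lemma~\ref{L3.2}) that $\cC^{2s+}$ test functions may replace $\cC^2$ ones in the definition of viscosity subsolution. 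Evaluating the equation for $\Psi_1$ at $x_0$ immediately gives $\fL(c_0\Psi_1)(x_0)+V(x_0)c_0\Psi_1(x_0)=\lambda^*_1 c_0\Psi_1(x_0)>0$, contradicting the subsolution inequality for $w$. In effect, the paper outsources the regularisation machinery to the single citation \cite[Th.~5.9]{CS09} and reduces the endgame to one test-function evaluation; your route would work for more general equations in which differences of sub- and supersolutions need not be subsolutions, at the cost of having to carry the Jensen--Ishii argument through explicitly.
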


\begin{remark}
{\rm
On completion of this paper we have learnt that \cite{DQT} obtained results similar to Theorems~\ref{T2.1}-\ref{T2.2},
using a different technique than ours. In our understanding, these and related methods in the literature, applied also
for other purposes, depend on the comparability of the used non-local operators with the Riesz kernel and the fractional
Laplacian. We emphasize that in this paper we develop a path integration-based approach which, like in the framework
first set in \cite{BL17a,BL17b,B18}, is applicable for a large class of non-local operators (Markov generators of
L\'evy processes) without a similar restriction, also covering qualitatively different jump kernels. Since this will
need further probabilistic machinery, it will be presented elsewhere, and we limit ourselves to the fractional Laplacian
here. A recent paper \cite{B20} deals with a class of non-local operators and extends some of results of this paper using
a probabilistic framework.
}
\end{remark}

Next we impose the following Ambrosetti-Prodi type condition on $f$.

\medskip
\begin{assumption}\label{Assump-AP}
Let $f:\bar\cD\times\RR\to\RR$ be such that
\begin{enumerate}
\item[(1)]
$f$ is H\"{o}lder continuous in $x$, locally with respect to $u$, and locally Lipschitz continuous in $u$,
uniformly in $x\in\bar\cD$,
\medskip
\item[(2)]
there exist $V_1, V_2\in\cC^\alpha(\bar\cD)$, for some $\alpha>0$, such that
\begin{align}
\lambda^*(-L-V_1) &>0 \quad\text{and}\quad \lambda^*(-L-V_2)<0\,, \label{AP1}
\\[2mm]
f(x, q) &\geq V_1(x)q-C\quad \text{for all}\; q\leq 0, \; x\in\bar\cD\,, \label{AP2}
\\[2mm]
f(x, q) &\geq V_2(x)q-C\quad \text{for all}\; q\geq 0, \; x\in\bar\cD\,, \label{AP3}
\end{align}
\item[(3)]
$f$ has at most linear growth, i.e., there exists a constant $C > 0$ such that
$$
\abs{f(x,q)} \leq C(1+\abs{q}),
$$
for all $(x,q)\in\bar\cD\times\RR$, or
\medskip
\item[(3')]
$L=-\fL$ (i.e. $k$ is constant), $d>1+2s$ and there exists a positive continuous function $a_0$ such that
$$
\lim_{q\to\infty}\, \frac{f(x, q)}{q^p}= a_0(x), \quad \text{for some $p\in\left(1, \frac{d+2s}{d-2s}\right)$},
$$
where the above limit holds uniformly in $x\in\bar\cD$.
\end{enumerate}
\end{assumption}

\medskip
\noindent
When referring to Assumption [AP] below, we will understand that conditions (1), (2) and one of (3) or (3') hold.
In what follows, we assume with no loss of generality that $f(x,0)=0$, otherwise $h$ can be replaced by $h-
f(\cdot,0)$.

Now we are ready to state our main result on the fractional Ambrosetti-Prodi problem.
\begin{theorem}\label{T-AP}
Let Assumption [AP] hold. Then there exists $\rho^* = \rho^*(h) \in\RR$ such that for $\rho < \rho^*$ the
Dirichlet problem \eqref{E-AP} has at least two solutions, at least one solution for $\rho = \rho^*$, and no
solution for $\rho > \rho^*$.
\end{theorem}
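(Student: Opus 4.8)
The plan is to treat $(P_\rho)$ as a one-parameter family and run a continuation argument in $\rho$, using Theorem~\ref{T2.2} as the comparison principle driving a monotone sub/supersolution scheme, and Theorem~\ref{T2.1} to solve the linear problems arising at each iteration step. Since $\lambda^*(\fL-V_1)>0$, Theorem~\ref{T2.1}(b) produces $w$ with $\fL w-V_1w=1$ in $\cD$, $w=0$ in $\cD^c$, and $w>0$ in $\cD$ by Theorem~\ref{T2.2}; by \eqref{AP2} the function $\underline u_\rho\df -tw$ is a subsolution of $(P_\rho)$ once $t=t(\rho)$ is large, and $\underline u_\rho\to-\infty$ uniformly. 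The first structural fact is that the solvability set $\Sigma\df\{\rho\in\RR: (P_\rho)\text{ has a solution}\}$ is downward closed: if $u$ solves $(P_{\rho_1})$ and $\rho<\rho_1$, then $\rho_1\Phi_1\ge\rho\Phi_1$ makes $u$ a supersolution of $(P_\rho)$, so choosing $t$ large with $\underline u_\rho\le u$ and iterating monotonically in $[\underline u_\rho,u]$ (shift by a large constant $\kappa$ to make $q\mapsto f(x,q)+\kappa q$ nondecreasing on the relevant range and invert $\fL+\kappa$, which is admissible as $\lambda^*(\fL+\kappa)>0$) yields a solution of $(P_\rho)$. Hence $\Sigma$ is an interval unbounded below, provided it is nonempty.

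Next I would show $\rho^*\df\sup\Sigma<\infty$ (nonexistence for large $\rho$). For $\rho\ge0$ the forcing $\rho\Phi_1$ has a favorable sign and one gets $u\ge-M_0$ with $M_0$ independent of $\rho$: where $u<0$, \eqref{AP2} and nonlocality make $\min(u,0)$ a supersolution of $(\fL-V_1)\,\cdot\,\ge-C-\norm h_\infty$, it vanishes on $\cD^c$, and where $u\ge0$ it equals $0$; comparing with the solution $z\le0$ of $(\fL-V_1)z=-C-\norm h_\infty$ via Theorem~\ref{T2.2} gives $\min(u,0)\ge z$, hence $u\ge-M_0$, and likewise for $\rho$ in any bounded set. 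Letting $\phi_2>0$ be the principal eigenfunction of $\fL-V_2$, so $\fL\phi_2=V_2\phi_2-\mu\phi_2$ with $\mu\df-\lambda^*(\fL-V_2)>0$, pairing $(P_\rho)$ with $\phi_2$ and using \eqref{AP3} on $\{u\ge0\}$ together with $\norm{u^-}_\infty\le M_0$ on $\{u<0\}$ to bound $\int(V_2u-f(x,u))\phi_2$ above gives $\rho\int_\cD\Phi_1\phi_2\le C'+\mu M_0\int_\cD\phi_2$, so $\rho$ is bounded above. For $\Sigma\ne\emptyset$ I would, for $\rho$ very negative, build a supersolution $\bar u_\rho\ge\underline u_\rho$: with $\xi<0$ solving $(\fL-V_1)\xi=-\Phi_1$, $\xi=0$ in $\cD^c$, the ansatz $\bar u_\rho=\abs{\rho}\xi$ corrected near $\partial\cD$ by a suitable multiple of $\fL^{-1}1$ works, using \eqref{AP2}, the linear growth of $f$, and the fact that $\Phi_1$, $\xi$, $\fL^{-1}1$ all vanish at rate $\dist(\cdot,\partial\cD)^s$; alternatively a contraction argument around $\abs{\rho}\xi$ does it. Thus $(-\infty,\rho_0]\subseteq\Sigma$, so $\rho^*\in\RR$.

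For the endpoint, take $\rho_n\uparrow\rho^*$ with solutions $u_n$; the lower bound above plus an \emph{a priori upper bound} give $\norm{u_n}_\infty\le M$. Under (3) the upper bound follows from linear growth and the solvability estimates of Theorem~\ref{T2.1}; under (3') it needs a blow-up argument: assuming $\norm{u_n^+}_\infty\to\infty$, rescale around a near-maximum point at scale $\norm{u_n}_\infty^{-(p-1)/(2s)}$, use $f(x,q)\sim a_0(x)q^p$ to pass to the limit, and contradict a Liouville-type nonexistence theorem for $\fL v=a_0v^p$ on $\Rd$ and on a half-space --- this is exactly where $d>1+2s$ and $p<\frac{d+2s}{d-2s}$ are used. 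With $\norm{u_n}_\infty\le M$, interior and boundary regularity for $\fL$ give compactness, and a limit solves $(P_{\rho^*})$. Finally, for $\rho<\rho^*$ I would get a second solution by degree theory: pick $\rho<\rho''<\rho^*$ and use a solution of $(P_{\rho''})$ as a strict supersolution to obtain, by monotone iteration from $\underline u_\rho$, the minimal solution $u_\rho$ of $(P_\rho)$, which is isolated with Leray--Schauder index $+1$; since all solutions for $\rho$ in the compact interval $[\rho,\rho^{**}+1]$ (with $\rho^{**}\ge\rho^*$ a nonexistence threshold) remain in a fixed ball, homotopy invariance makes the total degree of $I$ minus the solution operator equal to $0$ on that ball, forcing a further fixed point, hence a second solution of $(P_\rho)$.

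The main obstacle is the family of uniform a~priori bounds --- in particular the upper bound under the super-linear hypothesis (3'), which forces the rescaling analysis and the fractional Liouville theorems and dictates the dimensional and growth restrictions. Once these bounds and the comparison principle of Theorem~\ref{T2.2} are available, the downward closedness of $\Sigma$, the passage to the limit at $\rho^*$, and the degree count producing the second solution are comparatively routine.
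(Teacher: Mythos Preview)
Your overall strategy coincides with the paper's: a sub/supersolution scheme to get existence and minimal solutions, a~priori bounds (lower via the $V_1$--comparison, upper via blow-up and Liouville under (3')), and a Leray--Schauder degree count for multiplicity. Two steps, however, are not in order as written.

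First, your supersolution for $\Sigma\ne\emptyset$ does not work. With $\xi<0$ and $\bar u_\rho=|\rho|\xi\le0$, the supersolution inequality $\fL\bar u_\rho\ge f(x,\bar u_\rho)+\rho\Phi_1+h$ reduces to $V_1\bar u_\rho-f(x,\bar u_\rho)\ge h$, but \eqref{AP2} only yields $V_1q-f(x,q)\le C$ for $q\le0$, the wrong direction; under (3') there is no upper bound on $f(x,q)$ for $q\le0$ at all, so neither a boundary correction nor a contraction around $|\rho|\xi$ can rescue it. The paper instead builds a \emph{nonnegative} supersolution: solve $\fL\bar u=h^++C_1$ so that $\bar u>0$, use boundary regularity to get $\bar u\le c_1 d^s$ and Hopf's lemma to get $\Phi_1\ge c_2 d^s$, and then for $-\rho$ large enough obtain $-\rho\Phi_1\ge C_1\bar u^p\ge f(x,\bar u)$ from the one-sided growth bound $f(x,q)\le C_1(1+q^p)$ for $q\ge0$ implied by (3) or (3').

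Second, under (3) the sentence ``the upper bound follows from linear growth and the solvability estimates of Theorem~\ref{T2.1}'' is not a proof: Theorem~\ref{T2.1}(b) gives an estimate only after a fixed potential with positive principal eigenvalue is specified, and $f(x,u)/u$ is not such a potential. The paper's mechanism is a compactness argument that uses the second half of \eqref{AP1}: if $\norm{u_n}_\infty\to\infty$ along solutions with $\rho_n\ge-\hat\rho$, the normalizations $v_n=u_n/\norm{u_n}_\infty$ converge to some $v\ge0$ with $\norm{v}_\infty=1$, and \eqref{AP3} forces $v$ to be a semigroup supersolution of $(\fL-V_2)v\ge0$, contradicting $\lambda^*(\fL-V_2)<0$. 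Your pairing argument with $\phi_2$ does give nonexistence for large $\rho$ and is a legitimate (and more classical) alternative to the paper's route through $\rho^+\le C(1+\norm{u^+}_\infty)$, but it does not replace the a~priori bound itself, which you still need for compactness at $\rho=\rho^*$ and for the degree computation on the large ball.
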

%Below we will develop a combined analytic and path integration technique t
\noindent
To prove our main Theorem~\ref{T-AP}, like in classic proofs such as in \cite{DF80, DFS}, in our context too the
viscosity characterization of the principal eigenfunction plays a key role. In Theorem~\ref{T2.1} therefore first
we obtain such a characterization. The refined maximum principle shown in Theorem \ref{T2.2} will also be a key
object towards the proof of the fractional Ambrosetti-Prodi phenomenon. We will rely on our recent work \cite{BL17b},
in which we proposed a method based on Feynman-Kac representations to establish Aleksandrov-Bakelman-Pucci (ABP)
estimates for semigroup solutions of non-local Dirichlet problems for a large class of operators, including but
going well beyond the fractional Laplacian. We will also show that every classical solution in our context here
is also a semigroup solution and thus a generalized ABP estimate can be established for these solutions, which
will be essential for obtaining the a priori estimates.

\section{Proofs}\label{S-proofs}
\subsection{Preliminaries}
We begin by recalling some notations and results from \cite{BL17a, BL17b}, which will be used below.
Let $(\Omega,\cF,\Prob)$ be a complete probability space, and $\pro X$ be an isotropic L\'evy process on this space
with infinitesimal generator $L$. Given a function $V\in\cC(\bar\cD)$, called potential, the corresponding Feynman-Kac
semigroup is given by
$$
T^{\cD, V}_t f(x)= \Exp^x\left[e^{-\int_0^t V(X_s)\, \D{s}} f(X_t)\Ind_{\{t<\uptau_\cD\}}\right], \quad t>0,
\; x\in\cD, \; f\in L^2(\cD),
$$
where
$$
\uptau_\cD = \inf\{t > 0: \, X_t \not\in \cD\}
$$
is the first exit time of the process $\pro X$ from the domain $\cD$. When $L=-\fL$, $0 < s < 1$, 
it is shown in \cite[Lem~3.1]{BL17a} that
$T^{\cD, V}_t$, $t > 0$, is a Hilbert-Schmidt operator on $L^2(\cD)$ with continuous integral kernel in
$(0, \infty)\times\cD\times\cD$. Moreover, every operator $T_t$ has the same purely discrete spectrum, independent
of $t$, whose lowest eigenvalue is the principal eigenvalue $\lambda^*$ having multiplicity one, and the corresponding
principal eigenfunction $\Psi \in L^2(\cD)$ is strictly positive. We also have from \cite[Lem.~3.1]{BL17a} that $\Psi
\in\cC_0(\cD)$, where $\cC_0(\cD)$ denotes the class of continuous functions on $\Rd$ vanishing in $\cD^c$. Since $\Psi$
is an eigenfunction in semigroup sense, we have for all $t>0$ that
\begin{equation}\label{E3.1}
e^{-\lambda^* t} \Psi(x) = T_t \Psi(x) = \Exp^x\left[e^{-\int_0^t V(X_s)\, \D{s}} \Psi(X_t)\Ind_{\{t<\uptau_\cD\}}\right],
\quad x\in\cD.
\end{equation}

Let $\seq\cD$ be a collection of strictly decreasing domains with the property that $\cap_{n\geq 1}\cD_n=\cD$,
and each $\cD_n$ having its boundary satisfying the exterior cone condition. Denote by $\lambda^*_n$ the principal
eigenvalue in sense of \eqref{E1.4}. The following result  will be useful below (see also, \cite[Lem.~4.2]{BL17b}).
\begin{proposition}\label{P3.1}
The following hold:
\begin{itemize}
\item[(1)]
For every $n\in\NN$ we have $\lambda^*>\lambda^*_n$ and $\lim_{n\to\infty} \lambda^*_n=\lambda^*$.
\item[(2)]
Let $\tilde V\geq V$ and suppose that for an open set $\cU\subset\cD$ we have $\tilde V> V$ in $\cU$. Then
$\lambda^*_{\tilde V}>\lambda^*_V$, where $\lambda^*_V$ and $\lambda^*_{\tilde V}$ denote the principal
eigenvalues corresponding to the potentials $V$ and $\tilde{V}$, respectively.
\end{itemize}
\end{proposition}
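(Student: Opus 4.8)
\textbf{Proof plan for Proposition \ref{P3.1}.}

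\emph{Part (1).} The plan is to read off both assertions from the semigroup/Feynman--Kac picture recalled above, together with monotonicity of exit times under domain inclusion. Since $\cD\subset\cD_n$, for every $x\in\cD$ we have $\uptau_\cD\leq\uptau_{\cD_n}$ almost surely, hence $\Ind_{\{t<\uptau_\cD\}}\leq\Ind_{\{t<\uptau_{\cD_n}\}}$; applied to the variational (Rayleigh-type) characterization of the lowest eigenvalue of the Hilbert--Schmidt operator $T^{\cD,V}_t$ this gives $\lambda^*_n\leq\lambda^*$. Strictness follows from the fact that $\cD_n\setminus\cD$ has positive Lebesgue measure, so the two semigroups are genuinely different and the principal eigenfunctions (being strictly positive on their respective domains) cannot coincide; I would phrase this via the strong maximum principle / strict positivity of the kernels, concluding $\lambda^*>\lambda^*_n$. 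For the convergence $\lambda^*_n\to\lambda^*$: the sequence $(\lambda^*_n)$ is nondecreasing (as $\cD_n$ decreases, by the same monotonicity argument applied to the pair $\cD_{n+1}\subset\cD_n$) and bounded above by $\lambda^*$, so $\ell\df\lim_n\lambda^*_n$ exists with $\ell\leq\lambda^*$. To get $\ell\geq\lambda^*$ I would use the eigenvalue characterization $\lambda^*=\sup\{\lambda:\sF(\lambda,\cD)\neq\emptyset\}$: take $\lambda<\lambda^*$ and a witness $\psi\in\sF(\lambda,\cD)$; one shows (e.g.\ by a small perturbation/truncation of $\psi$, or by using that $\cap_n\cD_n=\cD$ and continuity of the relevant quantities) that $\psi$, or a slight modification of it, lies in $\sF(\lambda-\varepsilon,\cD_n)$ for $n$ large, whence $\lambda^*_n\geq\lambda-\varepsilon$ eventually, and letting $\varepsilon\to0$ then $\lambda\uparrow\lambda^*$ gives $\ell\geq\lambda^*$. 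Alternatively, and perhaps more cleanly, pass to the limit in the semigroup identity \eqref{E3.1} for $\cD_n$ using dominated convergence and $\uptau_{\cD_n}\downarrow\uptau_\cD$ (continuity of the exit time along a decreasing sequence of domains with $\cap_n\cD_n=\cD$), together with a compactness argument for the normalized eigenfunctions $\Psi_n$, to identify the limit of $e^{-\lambda^*_n t}\Psi_n$ as $e^{-\ell t}$ times a nonnegative eigenfunction of $T^{\cD,V}_t$, forcing $\ell=\lambda^*$.

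\emph{Part (2).} Here I would argue by strict monotonicity of the principal eigenvalue in the potential. Let $\Psi_{\tilde V}$ be the principal eigenfunction for $\fL$ with potential $\tilde V$ on $\cD$ (strictly positive in $\cD$, vanishing outside). Since $\tilde V\geq V$ everywhere and $\tilde V>V$ on the open set $\cU\subset\cD$, the inequality
\begin{equation*}
\fL\Psi_{\tilde V}-V\Psi_{\tilde V}
= \fL\Psi_{\tilde V}-\tilde V\Psi_{\tilde V}+(\tilde V-V)\Psi_{\tilde V}
= -\lambda^*_{\tilde V}\Psi_{\tilde V}+(\tilde V-V)\Psi_{\tilde V}
\end{equation*}
shows, because $(\tilde V-V)\Psi_{\tilde V}\geq 0$ in $\cD$ and is strictly positive on $\cU$, that $-\fL\Psi_{\tilde V}-(-V)\Psi_{\tilde V}+\lambda^*_{\tilde V}\Psi_{\tilde V}\geq (\tilde V-V)\Psi_{\tilde V}\geq 0$ with strict inequality on $\cU$; rewriting, $\Psi_{\tilde V}$ is a (strict, on $\cU$) supersolution for the operator $\fL-V+\lambda^*_{\tilde V}$, so $\Psi_{\tilde V}\in\sF(\lambda^*_{\tilde V},\cD)$ for the potential $V$ --- in fact a strict one. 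This already yields $\lambda^*_V\geq\lambda^*_{\tilde V}$ from the $\sup$-characterization; to upgrade to strict inequality I would use the strictness on $\cU$ to push $\lambda^*_{\tilde V}$ up by a definite amount: if $\lambda^*_V=\lambda^*_{\tilde V}$, then both $\Psi_V$ and $\Psi_{\tilde V}$ are principal eigenfunctions at the same eigenvalue, and comparing them via the refined maximum principle (Theorem~\ref{T2.2}) together with the strict sign of $(\tilde V-V)\Psi_{\tilde V}$ on $\cU$ forces a contradiction (one obtains $\Psi_{\tilde V}\equiv c\,\Psi_V$ from simplicity, but then the defect $(\tilde V-V)\Psi_{\tilde V}$ cannot be absorbed). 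Equivalently, in semigroup language one compares the Feynman--Kac functionals: since $e^{-\int_0^t V(X_s)\,\D s}\geq e^{-\int_0^t \tilde V(X_s)\,\D s}$ with strict inequality on the event that the path spends positive time in $\cU$ before $\uptau_\cD$ (a positive-probability event by irreducibility of the stable process), the operator $T^{\cD,V}_t$ dominates $T^{\cD,\tilde V}_t$ strictly on $L^2(\cD)$, and strict positivity of the kernels promotes this to a strict gap between the top eigenvalues $e^{-\lambda^*_V t}>e^{-\lambda^*_{\tilde V}t}$, i.e.\ $\lambda^*_V<\lambda^*_{\tilde V}$.

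\emph{Main obstacle.} The routine monotone inequalities $\lambda^*\geq\lambda^*_n$ and $\lambda^*_V\leq\lambda^*_{\tilde V}$ are immediate from the $\sup$-definitions; the two delicate points are (i) the \emph{convergence} $\lambda^*_n\to\lambda^*$ in Part (1), which needs genuine control of the eigenfunctions $\Psi_n$ as $n\to\infty$ --- a uniform-in-$n$ estimate (equicontinuity up to the boundary of $\cD_1$, say, via the interior/boundary regularity for the fractional Laplacian and boundedness of $V$) so that a subsequence converges and the limit is not the zero function --- and (ii) the \emph{strictness} of the inequalities. For (ii) the probabilistic route seems the most robust: one must show that, starting from (Lebesgue-a.e.) $x\in\cD$, the stable process spends positive time in the open set $\cU$ before exiting $\cD$ with positive probability, which is standard for isotropic stable processes, and then invoke positivity of the heat kernel of $T^{\cD,V}_t$ to turn a non-strict operator inequality into a strict spectral gap. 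I would present Part (1) via the $\sF(\lambda,\cD_n)$ approximation argument (cleaner, avoids compactness), and Part (2) via the Feynman--Kac domination argument, citing \cite{BL17a,BL17b} for the kernel positivity and spectral facts already quoted above.
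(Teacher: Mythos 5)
First, a point of comparison: the paper does not actually prove Proposition~\ref{P3.1} --- it is imported wholesale from \cite[Lem.~4.2]{BL17b} --- so there is no in-text argument to measure yours against, and I can only assess your plan on its own terms. It has one outright error and one route that would not close. The error is in Part (2), where your two ``equivalent'' arguments conclude \emph{opposite} inequalities. With the paper's conventions the eigenfunction for the potential $\tilde V$ satisfies $\fL\Psi_{\tilde V}+\tilde V\Psi_{\tilde V}=\lambda^*_{\tilde V}\Psi_{\tilde V}$, not $\fL\Psi_{\tilde V}-\tilde V\Psi_{\tilde V}=-\lambda^*_{\tilde V}\Psi_{\tilde V}$ as your displayed computation assumes, and the conclusion you extract from it, $\lambda^*_V\geq\lambda^*_{\tilde V}$, is the reverse of what the proposition asserts. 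The correct PDE-side observation is the mirror image: $\fL\Psi_V+\tilde V\Psi_V=\lambda^*_V\Psi_V+(\tilde V-V)\Psi_V\geq\lambda^*_V\Psi_V$, so it is $\Psi_V$ that belongs to $\sF(\lambda^*_V,\cD)$ \emph{for the potential $\tilde V$}, giving $\lambda^*_{\tilde V}\geq\lambda^*_V$. Your closing Feynman--Kac domination argument does land on the right strict inequality $\lambda^*_V<\lambda^*_{\tilde V}$ and is sound (self-adjointness of $T^{\cD,V}_t$, strict positivity of the kernel, and the positive probability of spending time in $\cU$ before $\uptau_\cD$ upgrade the domination to a strict spectral gap), so the version you say you would present survives; but the unnoticed contradiction between the two halves of that paragraph shows the signs were never checked.

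The second problem is in Part (1): the route you say you would present for $\lambda^*_n\to\lambda^*$ --- perturbing a witness $\psi\in\sF(\lambda,\cD)$ into $\sF(\lambda-\varepsilon,\cD_n)$ --- does not work as stated. Membership in $\sF(\lambda-\varepsilon,\cD_n)$ requires positivity and the supersolution inequality on all of the \emph{larger} set $\cD_n$, whereas $\psi$ is constrained only inside $\cD$; on the shell $\cD_n\setminus\cD$ a generic witness (for instance the principal eigenfunction $\Psi$ itself, which vanishes there while $\fL\Psi<0$ there, so that $-\fL\Psi-V\Psi+\lambda\Psi>0$) violates the inequality by a fixed amount that no small perturbation removes. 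Your alternative route --- monotone boundedness of $(\lambda^*_n)$, a uniform H\"older bound on the normalized $\Psi_n$, $\uptau_{\cD_n}\downarrow\uptau_\cD$ by regularity of $\partial\cD$, and passage to the limit in the semigroup identity \eqref{E3.1} --- is the one that actually closes the argument (and is essentially what \cite[Lem.~4.2]{BL17b} does); you should commit to it rather than to the ``cleaner'' one. For the strict inequality $\lambda^*>\lambda^*_n$, the crispest argument is that $\Psi_n\in\sF(\lambda^*_n,\cD)$ but does not vanish on $\cD^c$, which by the rigidity statement \eqref{ET2.1D} and the simplicity argument following it rules out $\lambda^*_n=\lambda^*$; your appeal to ``the eigenfunctions cannot coincide'' gestures at this but is not yet a proof.
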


\begin{proof}
Existence of a unique principal eigenfunction follows from \cite[Th.~1.1]{SQX} (see also \cite{DQT}). Note that \cite{SQX} considers the case $s>\frac{1}{2}$ due to the presence of the drift
term and the same proof would go through in our setting. Using \cite[Th.~1.3]{Serra15} we can show that the eigenfunction belongs to $\cC^{2s+}(\cD)$. Then the strict monotonicity
of the eigenvalue with respect to domains follows from \cite[Theorem~5.1]{SQX}. Using the arguments of \cite[Th.~1.6]{B18} it can be shown that $\lim_{n\to\infty} \lambda^*_n=\lambda^*$.
Part (2) again follows from \cite[Th.~5.1]{SQX}.
\end{proof}

Since the theory developed in \cite{BL17a} is probabilistic while here we are concentrating on viscosity
solutions, we point out the relationship between these notions of solution (compare also with \cite[Rem.~3.2]{BL17b}, \cite[Sec.~3.1]{B19}).
We say that $u\in\cC_{\rm b}(\Rd)$ is a semigroup sub-solution  of
$$
-L u+V u \leq g \quad \mbox{in $\cD$},
$$
if we have for all $x\in \cD$ that
$$
u(x) \leq \Exp^x\left[e^{-\int_0^{t\wedge\uptau_\cD} V(X_s) \D{s}} u(X_{t\wedge\uptau_\cD})\right]
+ \Exp^x\left[\int_0^{t\wedge\uptau_\cD} e^{-\int_0^{s} V(X_p) \D{p}} g(X_{s})\, \D{s}\right], \quad t\geq 0.
$$
Semigroup super-solutions are defined in an analogous way.

\begin{lemma}\label{L3.1}
Suppose that $V, g\in\cC(\bar\cD)$, and let $u$ satisfy
\begin{equation}
\label{flu}
-L u+V u \leq g \quad \mbox{in $\cD$}.
\end{equation}
We have the following:
\begin{itemize}
\item[(1)]
If $u\in\cC_{\rm b}(\Rd)$ is a semigroup sub-solution (resp., super-solution) of \eqref{flu}, then it is also a viscosity
sub-solution (resp., super-solution).
\item[(2)]
If $u\in\cC^{2s+}(\cD)\cap\cC_{\rm b}(\Rd)$ is a classical sub-solution (super-solution) of \eqref{flu}, then it is also a
semigroup sub-solution (resp., super-solution).
\end{itemize}
\end{lemma}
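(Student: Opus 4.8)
The plan is to prove the two implications separately, in both cases via It\^o--Dynkin calculus for the isotropic $2s$-stable process $\pro X$, whose infinitesimal generator is $-\fL$. It is convenient to rewrite \eqref{flu} as $(-\fL)u-Vu\ge -g$ in $\cD$. The basic tool is Dynkin's formula: for $w\in\cC_{\rm b}(\Rd)$ that is $\cC^2$ on an open set $\cO$, and $\sigma=\uptau_\cO\wedge t$, one has
\[
\Exp^x\!\Bigl[e^{-\int_0^{\sigma}V(X_r)\,\D r}\,w(X_{\sigma})\Bigr]-w(x)
=\Exp^x\!\Bigl[\int_0^{\sigma}e^{-\int_0^{r}V(X_\theta)\,\D\theta}\bigl((-\fL)w-Vw\bigr)(X_r)\,\D r\Bigr]
\]
for every $x\in\cO$. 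Inserting a one-sided bound on $(-\fL)w-Vw$ turns this identity into a one-sided (sub/super) semigroup inequality, and the whole proof is an exercise in choosing $\cO$, $w$ and $\sigma$ appropriately.

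For part (2), fix an exhaustion $\cD^{(1)}\Subset\cD^{(2)}\Subset\cdots$ with $\bigcup_n\cD^{(n)}=\cD$, and put $\uptau_n\df\uptau_{\cD^{(n)}}$. Since $\overline{\cD^{(n)}}$ is a compact subset of $\cD$, the hypothesis $u\in\cC^{2s+}(\cD)$ yields $u\in\cC^{2s+\gamma_n}(\overline{\cD^{(n)}})$ for some $\gamma_n>0$; together with $u\in\cC_{\rm b}(\Rd)$ this makes $\fL u$ pointwise defined and continuous on $\overline{\cD^{(n)}}$ and legitimizes the formula above with $\cO=\cD^{(n)}$ and $\sigma=\uptau_n\wedge t$. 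Substituting $(-\fL)u-Vu\ge-g$, valid throughout $\cD^{(n)}\subset\cD$, gives
\[
u(x)\le \Exp^x\!\Bigl[e^{-\int_0^{\uptau_n\wedge t}V(X_r)\,\D r}\,u(X_{\uptau_n\wedge t})\Bigr]
+\Exp^x\!\Bigl[\int_0^{\uptau_n\wedge t}e^{-\int_0^{r}V(X_\theta)\,\D\theta}\,g(X_r)\,\D r\Bigr]
\]
for all $x\in\cD^{(n)}$ and $t>0$. Letting $n\to\infty$ one has $\uptau_n\uparrow\uptau_\cD$ almost surely and, by quasi-left-continuity of $\pro X$, also $X_{\uptau_n\wedge t}\to X_{\uptau_\cD\wedge t}$ almost surely; since $u$ and $V$ are bounded the integrands are dominated by deterministic constants, so bounded convergence carries the inequality into the semigroup sub-solution inequality on $\cD$. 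Reversing every inequality handles the super-solution case.

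For part (1), let $u\in\cC_{\rm b}(\Rd)$ be a semigroup sub-solution and suppose it fails to be a viscosity sub-solution at some $x_0\in\cD$. Then there are $\delta>0$ with $B_{2\delta}(x_0)\Subset\cD$ and a function $\psi$ which equals a $\cC^2$ function $\varphi$ on $B_{2\delta}(x_0)$ and equals $u$ on its complement, with $\psi\ge u$ on $\Rd$, $\psi(x_0)=u(x_0)$, and $\fL\psi(x_0)+V(x_0)u(x_0)>g(x_0)$. Since $\psi$ is $\cC^2$ near $x_0$ and bounded on $\Rd$, the function $\fL\psi$ is continuous near $x_0$ (the only non-smooth contribution being the tail integral over the region where $\psi=u$ is merely bounded and continuous), so after shrinking $\delta$ we may assume $\fL\psi+V\psi>g$ throughout $B_{\delta}(x_0)$. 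Applying the Dynkin formula to $\psi$ with $\cO=B_{\delta}(x_0)$, $\sigma=\uptau_{B_{\delta}(x_0)}\wedge t$, and inserting $(-\fL)\psi-V\psi<-g$, produces
\[
\psi(x_0)>\Exp^{x_0}\!\Bigl[e^{-\int_0^{\sigma}V(X_r)\,\D r}\,\psi(X_{\sigma})\Bigr]
+\Exp^{x_0}\!\Bigl[\int_0^{\sigma}e^{-\int_0^{r}V(X_\theta)\,\D\theta}\,g(X_r)\,\D r\Bigr].
\]
Using $\psi\ge u$ everywhere and $\psi(x_0)=u(x_0)$ to replace $\psi$ by $u$ on both sides then contradicts the semigroup sub-solution inequality for $u$ applied on the subdomain $B_{\delta}(x_0)$, which holds by the strong Markov property of $\pro X$ (equivalently, by the submartingale property underlying the definition), cf.\ \cite{BL17a, BL17b}. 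Hence $u$ is a viscosity sub-solution, and the super-solution case is symmetric.

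I expect the main obstacle to be the limiting step in part (2): one must know that the exit times $\uptau_n$ of the exhausting domains increase to $\uptau_\cD$ and that $X_{\uptau_n\wedge t}\to X_{\uptau_\cD\wedge t}$, which rests on path regularity of the stable process together with the $\cC^2$ regularity of $\partial\cD$ (so that the process does not accumulate at the boundary before exiting); these facts, and the associated integrability bookkeeping, are already recorded in \cite{BL17a}. A secondary point is the reduction, in part (1), of the semigroup sub-solution property on $\cD$ to the same property on a small ball $B_\delta(x_0)$, which is exactly the strong Markov argument invoked above.
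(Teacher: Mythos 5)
Your argument is correct and follows essentially the same route as the paper's: Dynkin/It\^o's formula combined with the submartingale/optional sampling step to localize the semigroup inequality to a small ball around the touching point for part (1), and an exhaustion of $\cD$ with $\uptau_n\uparrow\uptau_\cD$ for part (2). The only cosmetic differences are that you phrase part (1) as a contradiction and apply the Dynkin formula directly to the $\cC^{2s+}$ function $u$ (resp.\ to the glued test function) where the paper first interposes $\cC^2$ approximants $\psi_m$ (resp.\ $\varphi_{r,n}$) before invoking It\^o; both shortcuts are legitimate here since $\fL$ of the relevant functions is pointwise defined and continuous on the sets where the formula is used.
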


\begin{proof}
Consider part (1). Choose a point $x\in\cD$, and let $\varphi\in \cC^2(\cD)$ be a test function that (strictly)
touches $u$ at $x$ from above, i.e., for a ball $\cB_r(x)\subset \cD$ we have $\varphi(x)=u(x)$, and $\varphi(y)>
u(y)$ for $y\in \cB_r(x)\setminus\{x\}$. Define
\[
\varphi_r(y)=\left\{\begin{array}{lll}
\varphi(y) & y\in \cB_r(x), \\
u(y) & y\in \cB^c_r(x).
\end{array} \right.
\]
To show that $u$ is viscosity solution, we need to show that $-L\varphi_r(x) + V(x) u(x)\leq g(x)$. Consider a sequence
of functions $(\varphi_{r, n})_{n\in\mathbb N} \subset \cC^2(\cB_r(x))\cap \cC(\Rd)$ with the property that $\varphi_{2, n}=
\varphi_r$ outside $\cB_{r+\frac{1}{n}}(x)\setminus \cB_r(x)$, $\varphi_{r, n}\geq u$, and $\varphi_{r, n}\to \varphi_r$ almost
surely, as $n\to\infty$. Since $u$ is a semigroup subsolution, we have that
$$
u(x) \leq \Exp^x\left[e^{-\int_0^{t\wedge\uptau_\cD} V(X_s) \D{s}} u(X_{t\wedge\uptau_\cD})\right]
+ \Exp^x\left[\int_0^{t\wedge\uptau_\cD} e^{-\int_0^{s} V(X_p) \D{p}} g(X_{s})\, \D{s}\right], \quad t\geq 0.
$$
It is direct to show that $\pro Y$, $Y_t= e^{-\int_0^{t\wedge\uptau_\cD} V(X_s) \D{s}} u(X_{t\wedge\uptau_\cD})
+ \int_0^{t\wedge\uptau_\cD} e^{-\int_0^{s} V(X_p) \D{p}} g(X_{s})\, \D{s}$, is a submartingale with respect to the natural
filtration of $(X_{t\wedge\uptau_\cD})_{t\geq 0}$, see also \cite{BL17b}, hence by optional sampling we obtain that
\begin{equation}\label{ET3.1A}
u(x) \leq \Exp^x\left[e^{-\int_0^{t\wedge\uptau_r} V(X_s) \D{s}} u(X_{t\wedge\uptau_r})\right]
+ \Exp^x\left[\int_0^{t\wedge\uptau_r} e^{-\int_0^{s} V(X_p) \D{p}} g(X_{s})\, \D{s}\right], \quad t\geq 0,
\end{equation}
where $\uptau_r$ denotes the first exit time from the ball $\cB_r(x)$. On the other hand, by applying It\^{o}'s formula
on $\varphi_{r, n}$ we obtain
$$
\Exp^x\left[e^{-\int_0^{t\wedge\uptau_r} V(X_s) \D{s}} \varphi_{r, n}(X_{t\wedge\uptau_\cD})\right]-\varphi_{r, n}(x)
= \Exp^x\left[\int_0^{t\wedge\uptau_r} e^{-\int_0^{s} V(X_p)\D{p}} (L\varphi_{r, n}-V\varphi_{r, n})(X_{s})\, \D{s}\right],
$$
for all $t\geq 0$. Combining this with \eqref{ET3.1A} gives
$$
\Exp^x\left[\int_0^{t\wedge\uptau_r} e^{-\int_0^{s} V(X_p)\D{p}} (L\varphi_{r, n}-V\varphi_{r, n})(X_{s})\, \D{s}\right]
+\Exp^x\left[\int_0^{t\wedge\uptau_r} e^{-\int_0^{s} V(X_p) \D{p}} g(X_{s})\, \D{s}\right] \geq 0.
$$
On dividing both sides by $t$ and letting $t\to 0$, it follows that
$$
L\varphi_{r, n}(x)-V(x)\varphi_{r, n}(x) + g(x)\geq 0.
$$
Thus by letting $n\to\infty$, we obtain
$$
-L\varphi_{r}(x)+V(x)\varphi_{r}(x) \leq  g(x)\,,
$$
which proves the first part of the claim.

Next consider part (2). By the property of $u$ we note that $L u$ is continuous in $\cD$. Consider a sequence
of open sets $\cK_n\Subset K_{n+1} \Subset \cD$ and $\cup_n K_n=\cD$. For fixed $n$, let $(\psi_m)_{m\in\mathbb N}
\subset \cC^2(\cD)\cap\cC_{\rm b}(\Rd)$ be a sequence of functions satisfying
$$
\sup_{x\in\bar{\cK}_n}\abs{L u(x)-L\psi_m(x)} + \sup_{x\in\Rd}\abs{u(x)-\psi_m(x)}\to 0,\quad
\text{as}\quad  m\to\infty.
$$
Applying It\^{o}'s formula to $\psi_m$, we get that
$$
\Exp^x\left[e^{-\int_0^{t\wedge\uptau_n} V(X_s) \D{s}} \psi_{m}(X_{t\wedge\uptau_n})\right]-\psi_{m}(x)
= \Exp^x\left[\int_0^{t\wedge\uptau_n} e^{-\int_0^{s} V(X_p)\D{p}} (L\varphi_{m}-V\varphi_{m})(X_{s})\, \D{s}\right],
$$
where $\uptau_n$ denotes the first exit time from the set $\cK_n$. First letting $m\to\infty$ and then $n\to\infty$
above, and using the fact that $\uptau_n\uparrow \uptau_\cD$ almost surely, we obtain
$$
\Exp^x\left[e^{-\int_0^{t\wedge\uptau_\cD} V(X_s) \D{s}} u(X_{t\wedge\uptau_n})\right]-u(x)
\geq  -\Exp^x\left[\int_0^{t\wedge\uptau_\cD} e^{-\int_0^{s} V(X_p)\D{p}} g (X_{s})\, \D{s}\right],\quad t\geq 0.
$$
This shows that $u$ is a semigroup subsolution.
\end{proof}

\subsection{Proof of Theorem~\ref{T2.1}}
Now we are ready to prove our first theorem.
\begin{proof}
First consider (a). As discussed in Proposition~\ref{P3.1}, there exists an
eigenpair $(\lambda^*, \Psi)\in\RR\times\cC_0(\cD)$ with $\Psi>0$ in $\cD$, satisfying
\begin{equation}\label{ET2.1C}
-L\Psi=(\lambda^*-V)\Psi \quad \text{in}\; \cD, \quad \text{and}\quad \Psi=0\quad \text{in}\; \cD^c,
\end{equation}
in viscosity sense.
By \cite[Th.~2.6]{B18} we have $\Psi\in\cC^\alpha(\Rd)$ for some $\alpha>0$, independent of $\Psi$. Since $V$ is H\"{o}lder continuous, it follows that $(\lambda^*-V)\Psi$
is H\"{o}lder continuous in $\bar\cD$. A combination of \eqref{ET2.1C} and \cite[Th.~1.3]{Serra15} gives that $\Psi\in
\cC^{2s+}(\cD)$, implying existence for \eqref{ET2.1A}. Using Lemma~\ref{L3.1} we also note that $(\lambda^*, \Psi)$ satisfies \eqref{E3.1}. Simplicity of
$\lambda^*$ again follows from \cite[Th.~1.2]{SQX}.

Next we consider (b). The main idea in proving \eqref{ET2.1B} is to use Schauder's fixed point theorem. Consider a map
$\cT:\cC_0(\cD)\to\cC_0(\cD)$ defined such that for every $\psi\in\cC_0(\cD)$, $\cT\psi=\varphi$ is the unique viscosity
solution of
\begin{equation}\label{ET2.1E}
-L\varphi =g-V\psi\quad \text{in}\; \cD, \quad \text{and}\quad \varphi=0\quad \text{in}\; \cD^c.
\end{equation}
Using \cite[Th.~2.6]{B18} we know that
$$
\norm{\cT\psi}_{\cC^s(\Rd)}\leq c_1 (\norm{g}_\infty + \norm{V\psi}_\infty),
$$
for a constant $c_1 = c_1(\cD, d, s)$. This implies that $\cT$ is a compact linear operator. It is also easy to see that
$\cT$ is continuous.

In a next step we show that the set
$$
\sB= \big\{\varphi\in\cC_0(\cD)\; :\; \varphi=\mu\cT\varphi \;\; \text{for some}\;\; \mu\in[0, 1]\big\}
$$
is bounded in $\cC_0(\cD)$. For every $\varphi\in\sB$ we have
\begin{equation}\label{ET2.1F}
-L\varphi = \mu g - \mu V\varphi\quad \text{in}\; \cD, \quad \text{and}\quad \varphi=0\quad \text{in}\; \cD^c,
\end{equation}
for some $\mu\in[0,1]$. As argued above, we note that $\varphi\in\cC^{2s+}(\cD)\cap\cC_0(\cD)$.
Thus by Lemma~\ref{L3.1} we see that $\varphi$ is a semigroup solution of \eqref{ET2.1F}. To show boundedness of $\sB$
it suffices to show that for a constant $c_2$, independent of $\mu$, we have
\begin{equation}\label{ET2.1G}
\sup_{x\in\bar\cD}\abs{\varphi(x)}\leq c_2\, \sup_{x\in\bar\cD}\abs{g(x)}.
\end{equation}
Once \eqref{ET2.1G} is established, the existence of a fixed point of $\cT$ follows by Schauder's fixed point theorem.
Since every solution of \eqref{ET2.1B} is a semigroup solution and $\lambda^*>0$, the uniqueness of the solution follows
from \cite[Th.~4.5]{BL17b}. To obtain \eqref{ET2.1G} recall from \cite[Cor.~4.3]{BL17b} (which basically uses Proposition~\ref{P3.1} above) that
\begin{equation}\label{ET2.1H}
\lambda^*_{\mu V}=
-\lim_{t\to\infty} \frac{1}{t} \log \Exp^x\left[e^{-\int_0^t \mu V(X_s)\, \D{s}}\Ind_{\{\uptau_\cD>t\}}\right], \quad x\in\cD\,.
\end{equation}
Let $\lambda^*_0>0$ be the principal eigenvalue corresponding to the potential $V=0$. Then from the concavity of the map
$\mu\mapsto \lambda^*_{\mu V}$, which results from \eqref{ET2.1H} by applying Young's inequality, it follows that
$$
\lambda^*_{\mu V}\geq \lambda^*_V \wedge \lambda^*_0 = 2\delta>0.
$$
Hence by using \eqref{ET2.1H} and the continuity of $\mu\mapsto\lambda^*_{\mu V}$, we find constants $c_3>0, \mu_0>1$, such
that for every $\mu\in[0, \mu_0]$ we have
\begin{equation}\label{ET2.1I}
\Exp^x\left[e^{-\int_0^t \mu V(X_s)\, \D{s}}\Ind_{\{\uptau_\cD>t\}}\right]\leq c_3 e^{-\delta t},\quad t\geq 0, \; x\in\cD.
\end{equation}
Since $\varphi$ is a semigroup solution, we have that
$$
\varphi(x) = \Exp^x\left[ e^{-\int_0^t \mu V(X_s)\, \D{s}} \varphi(X_t)\Ind_{\{\uptau_\cD>t\}}\right] +
\int_0^t T^{\cD, \mu V}_s g (x) \D{s}.
$$
Letting $t\to\infty$, using \eqref{ET2.1I} and H\"{o}lder inequality, it is easily seen that the first term at the right hand
side of the above vanishes. Again by \eqref{ET2.1I}, we have for $x\in\cD$
$$
\Big|T^{\cD, \mu V}_t g (x)\Big| \leq c_3 \sup_{x\in\bar\cD}\abs{g}\, e^{-\delta t}, \quad t\geq 0.
$$
Thus finally we obtain
$$
\sup_{x\in\bar\cD}\abs{\varphi(x)}\leq\; \frac{c_3}{\delta} \sup_{x\in\bar\cD}\abs{g(x)},
$$
yielding \eqref{ET2.1G}.
\end{proof}

\subsection{Proof of Theorem~\ref{T2.2}}
First we show that the $\cC^2$-class of test functions can be replaced by functions of class $\cC^{2s+}$ in the definition of
the viscosity solution.
\begin{lemma}\label{L3.2}
Let $u\in \cC_{\rm b}(\Rd)$ be a viscosity subsolution of $\fL u + V u\leq g$ in $\cD$. Consider $x\in\cD$. Suppose that
there exists an open set  $\cN\Subset \cD$, containing $x$, and a function $\varphi\in \cC^{2s+}(\bar \cN)$ satisfying
$\varphi(x)=u(x)$ and $\varphi> u$ in $\cN\setminus\{x\}$. Define
\[
\varphi_\cN(y)=\left\{\begin{array}{ll}
\varphi(y) & \text{for}\;\; y\in \cN,
\\[2mm]
u(y) & \text{for}\;\; y\in \Rd\setminus \cN.
\end{array} \right.
\]
Then we have $-L\varphi_\cN(x) + V(x) u(x)\leq g(x)$.
\end{lemma}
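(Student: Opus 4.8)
The plan is to reduce the statement, by a monotonicity property of $\fL$, to the case where the $\cC^{2s+}$ test function is extended across a small fixed ball around $x$, and then to approximate that test function by smooth ones that touch $u$ from above at points converging to $x$, to which the ordinary $\cC^2$-definition of viscosity subsolution applies. Fix $\rho>0$ with $\bar{\cB}_{2\rho}(x)\subset\cN$, and let $\varphi_\rho$ be the function equal to $\varphi$ on $\cB_\rho(x)$ and to $u$ on $\Rd\setminus\cB_\rho(x)$. Since $\varphi(x)=u(x)$ and $\varphi>u$ on $\cN\setminus\{x\}$, we have $\varphi\ge u$ on $\cN$, hence $\varphi_\cN\ge\varphi_\rho$ on $\Rd$ with equality at $x$; as $\fL f(x)=c_{d,s}\,\mathrm{p.v.}\!\int_{\Rd}\bigl(f(x)-f(y)\bigr)|x-y|^{-d-2s}\D y$ reverses this inequality, $\fL\varphi_\cN(x)\le\fL\varphi_\rho(x)$. (Both quantities are finite: near $x$ the integrand is controlled because $\varphi\in\cC^{2s+\gamma}(\bar{\cB}_{2\rho}(x))$ for some $\gamma>0$, and away from $x$ both functions are bounded.) It therefore suffices to prove $\fL\varphi_\rho(x)+V(x)u(x)\le g(x)$.

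Next I would produce admissible smooth test functions. Let $\varphi_n=\varphi*\eta_{1/n}$ be mollifications, smooth on $\bar{\cB}_\rho(x)$ for large $n$, with $\varphi_n\to\varphi$ uniformly on $\bar{\cB}_\rho(x)$ and in $\cC^{2s+\gamma'}(\bar{\cB}_\rho(x))$ for every $\gamma'\in(0,\gamma)$. Put $m_n=\min_{\bar{\cB}_\rho(x)}(\varphi_n-u)$, attained at some $x_n\in\bar{\cB}_\rho(x)$. Because $\varphi-u$ attains its minimum $0$ over $\bar{\cB}_\rho(x)$ only at $x$ and $\varphi_n-u\to\varphi-u$ uniformly, $x_n\to x$ and $m_n\to0$, so $x_n\in\cB_\rho(x)$ for large $n$. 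Then $\psi_n:=\varphi_n-m_n\in\cC^2(\cB_\rho(x))$ satisfies $\psi_n\ge u$ on $\bar{\cB}_\rho(x)$ with $\psi_n(x_n)=u(x_n)$, so---after the harmless modification $\psi_n+\varepsilon|y-x_n|^2$, $\varepsilon\downarrow0$, should strict touching be required---$\psi_n$ is an admissible test function for $u$ at $x_n$. Applying the viscosity subsolution property at $x_n$, together with the same monotonicity as above (the extension $\chi_n:=\psi_n\Ind_{\cB_\rho(x)}+u\Ind_{\Rd\setminus\cB_\rho(x)}$ lies below the extension of $\psi_n$ by $u$ outside a small ball around $x_n$, with equality at $x_n$), gives
$$
\fL\chi_n(x_n)+V(x_n)u(x_n)\le g(x_n).
$$

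It remains to pass to the limit. Split $\fL\chi_n(x_n)$ into an integral over $\cB_{\rho/4}(x_n)$, using the pointwise representation of $\fL$ there (legitimate since $\psi_n$ is smooth), plus one over its complement, and split $\fL\varphi_\rho(x)$ the same way over $\cB_{\rho/4}(x)$ and its complement. For the small-scale part, the substitution $z\mapsto z-x_n$, the uniform bound $\lesssim\bigl(\sup_n\|\varphi_n\|_{\cC^{2s+\gamma'}}\bigr)|z-x_n|^{\gamma'-d}$, and the $\cC^{2s+\gamma'}$-convergence of $\varphi_n$ to $\varphi$ together with $x_n\to x$ give, via dominated convergence, the small-scale part of $\fL\varphi_\rho(x)$. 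For the tail, $\chi_n$ is uniformly bounded and converges pointwise a.e.\ to $\varphi_\rho$, $\chi_n(x_n)=u(x_n)\to u(x)=\varphi_\rho(x)$, and $\Ind_{\Rd\setminus\cB_{\rho/4}(x_n)}\to\Ind_{\Rd\setminus\cB_{\rho/4}(x)}$ a.e., so dominated convergence with dominating function $\lesssim\bigl(\rho^{-d-2s}\wedge|x-z|^{-d-2s}\bigr)\Ind_{\{|x-z|>\rho/8\}}$ (valid for large $n$) gives the tail of $\fL\varphi_\rho(x)$. Hence $\fL\chi_n(x_n)\to\fL\varphi_\rho(x)$; combining this with $x_n\to x$ and the continuity of $V$ and $g$, and letting $n\to\infty$, yields $\fL\varphi_\rho(x)+V(x)u(x)\le g(x)$, which by the first step proves the lemma.

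The main obstacle will be the last step carried out with a base point $x_n$ that only converges to $x$ and with a cutoff ball moving along with it. Strict touching of $\varphi$ is what makes this work, since it forces $x_n\to x$; and it is genuinely needed---one cannot, for finite $n$, use a fixed base point at $x$, because a smooth approximant of $\varphi$ shifted to agree with $u$ at $x$ will in general drop below $u$ near $x$, and no quadratic correction repairs this at the relevant scales. The remaining difficulty, controlling $\fL$ uniformly as both the base point and the cutoff move, is handled by the monotonicity reduction to a fixed ball and by separating a small-scale piece governed by the $\cC^{2s+\gamma}$-regularity of $\varphi$ from a uniformly integrable tail.
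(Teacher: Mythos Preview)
Your proof is correct and follows essentially the same approach as the paper's: approximate $\varphi$ in $\cC^{2s+\gamma}$ by smooth functions, shift each by its minimum deviation from $u$ so that it touches $u$ from above at points $x_n\to x$, apply the $\cC^2$-viscosity definition at $x_n$, and pass to the limit. The paper works directly on $\cN$ rather than first reducing by monotonicity to a ball $\cB_\rho(x)$, and is terser about the limit step, but the structure is identical. One small slip: in your parenthetical you say $\chi_n$ lies \emph{below} the small-ball extension of $\psi_n$ around $x_n$; in fact it lies \emph{above} it (since $\psi_n\ge u$ on $\cB_\rho(x)$), and this is exactly the direction needed so that monotonicity yields $\fL\chi_n(x_n)\le g(x_n)-V(x_n)u(x_n)$.
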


\begin{proof}
Consider a sequence of functions in $(\varphi_m)_{m\in\mathbb N}$, $\cC^2$ in a neighbourhood of $x$, and such that
$\norm{\varphi_m-\varphi}_{\cC^{2s+\alpha}(\bar N)}\to 0$, for some $\alpha>0$, as $m\to\infty$. This is possible since
$\varphi\in\cC^{2s+}(\bar N)$. Let
$$
\delta_m = \min_{\bar N}(\varphi_m-u).
$$
Then $\hat{\varphi}_m=\varphi_m-\delta_m$ touches $u$ from above in $\bar N$. Since $\sup_{\bar N}\abs{\hat{\varphi}_m-u}
\to 0$, it follows that there exists a sequence $(x_m)_{m\in\mathbb N}\in N$ such that $x_m\to x$, $\delta_m\to 0$, as $m\to\infty$, and
$\hat{\varphi}_m(x_m)=u(x_m)$. Set
\[
\varphi_{N,m}(y)=
\left\{\begin{array}{ll}
\hat{\varphi}_m(y) & \text{for}\;\; y\in N,
\\[2mm]
u(y) & \text{for}\;\; y\in \Rd\setminus N.
\end{array}
\right.
\]
By the definition of the viscosity subsolution we find
\begin{align*}
& -\frac{C(d, s)}{2}\int_{B_r(x)}\frac{\varphi_{N,m}(x_m+y)+\varphi_{N,m}(x_m-y)-2\varphi_{N,m}(x_m)}{\abs{y}^{d+2s}}\, \D{y}\\
& - \frac{C(d, s)}{2}\int_{B_r^c(x)}\frac{\varphi_{N,m}(x_m+y)+\varphi_{N,m}(x_m-y)-2\varphi_{N,m} (x_m)}{\abs{y}^{d+2s}}\, \D{y}
+ V(x_m) u(x_m)
\leq g(x_m),
\end{align*}
where $C(d, s)$ is the normalizing constant for fractional Laplacian and $r>0$ is chosen to satisfy $B_{2r}(x)\Subset N$.
 It is easily seen that we can let $m\to\infty$ above
and use the continuity of $V, g, u$ to obtain
$$
-L \varphi_N(x) + V(x) u(x)\leq g(x),
$$
which shows the claim.
\end{proof}

Next we prove our second theorem stated in the previous section.
\begin{proof}[Proof of Theorem~\ref{T2.2}]
Let $w=u-v$. By \cite[Th.~5.9]{CS09} it then follows that
\begin{equation}\label{ET2.2A}
-L w + V w\leq 0 \quad \text{in}\; \cD,
\end{equation}
in viscosity sense. Note that $w\leq 0$ in $\cD^c$, while we need to show that $w\leq 0$ in $\Rd$. Suppose, to the contrary,
that $w^+>0$ in $\cD$. Using Proposition \ref{P3.1}, we find a domain $\cD_1\Supset\cD$ with a $\cC^1$-boundary and
$\lambda^*_1>0$, where $\lambda^*_1$ is the principal eigenvalue for $\cD_1$ and potential $V$. In fact, we may take $V$ as
a $\cC^\alpha$-extension from $\cD$ to $\cD_1$. Let $\Psi_1\in\cC^{2s+}(\cD_1)\cap\cC_0(\cD_1)$ be the corresponding positive
principal eigenfunction. Thus we have
\begin{equation}\label{ET2.2B}
-L \Psi_1 + V \Psi_1=\lambda^*_1 \Psi_1 \;\; \text{in}\;\; \cD_1 \quad \text{and} \quad \Psi_1=0 \;\; \text{in}\;\; \cD^c_1.
\end{equation}
Define
$$
c_0=\inf\big\{c\in(0, \infty)\; :\; c\Psi_1-w>0 \;\; \text{in}\;\; \cD\big\}.
$$
Since $\min_{\cD}\Psi_1>0$, it follows that $c_0$ is finite, and $w^+>0$ implies that $c_0>0$. Then $\Phi=c_0\Psi_1-w$ necessarily
vanishes at some point, say $x_0\in\cD$. This follows from the fact that $w^+=0$ on $\partial\cD$. Thus $c_0\Psi_1$ lies above $u$
on all of $\Rd$ and touches $w$ at $x_0$. Hence by \eqref{ET2.2A} and Lemma~\ref{L3.2} it follows that
$$
-L(c_0\Psi_1)(x_0) + V(x_0)(c_0\Psi_1(x_0))\leq 0.
$$
This leads to a contradiction as the left hand side of the above expression equals $\lambda^*_1(c_0\Psi_1(x_0))>0$ by \eqref{ET2.2B}.
\end{proof}

\subsection{Proof of Theorem~\ref{T-AP}}
Now we turn to proving our main result on the fractional Ambrosetti-Prodi phenomenon. The strategy of proof will be divided
in the following steps.
\begin{itemize}
\item[(1)]
First we find $\rho_1$ such that for every $\rho\leq \rho_1$ there exists a minimal solution of \eqref{E-AP}. This will be done in
Lemmas~\ref{L3.3} and ~\ref{L3.4} below.
\item[(2)]
Next we find $\rho_2 > \rho_1$ such that no solution of \eqref{E-AP} above $\rho_2$ exists. This is the content of Lemma~\ref{L3.7}
and Lemma~\ref{L2.8}.
\item[(3)]
Finally, we follow the arguments in \cite{DF80} to find the bifurcation point $\rho^*$.
\end{itemize}

We begin by showing the existence of a sub/super-solution, which will be used for constructing a minimal solution.
\begin{lemma}\label{L3.3}
Let Assumption [AP] hold. The following hold:
\begin{itemize}
\item[(1)]
For every $\rho\in\RR$ there exists $\underline{u}\in \cC^{2s+}(\cD)\cap\cC_0(\cD)$ satisfying $\underline{u}\leq 0$ in $\cD$
and
$$
-L \underline{u} \leq f(x, \underline{u}) + \rho \Phi_1 + h(x) \quad \text{in}\;\; \cD.
$$
\item[(2)]
There exists $\bar{\rho}_1<0$ such that for every $\rho\leq \bar{\rho}_1$ there exists $\bar{u}\in \cC^{2s+}(\cD)\cap\cC_0(\cD)$
satisfying $\bar{u}\geq 0$ in $\cD$ and
$$
-L \bar{u} \geq f(x, \bar{u}) + \rho \Phi_1 + h(x) \quad \text{in}\;\; \cD.
$$
\item[(3)]
We can construct $\underline{u}$ to satisfy $\underline{u}\leq \hat{u}$, for every super-solution $\hat{u}$ of
$$
-L \hat{u} \geq f(x, \hat{u}) + \rho \Phi_1 + h(x) \quad \text{in}\;\; \cD,
$$
with $\hat{u}\in\cC^{2s+}(\cD)\cap\cC_0(\cD)$.
\end{itemize}
\end{lemma}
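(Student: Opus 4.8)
The plan is to treat the three items with the tools already in place: Theorems~\ref{T2.1} and~\ref{T2.2} and the barrier comparisons of the previous subsections. For \textbf{part~(1)}, fix $\rho$ and pick a constant $\tilde C\geq C$ large enough that $g_\rho:=-\tilde C+\rho\Phi_1+h\leq 0$ on $\bar\cD$; this is possible since $0\leq\Phi_1\leq 1$ and $h$ is bounded, and $g_\rho$ is H\"older on $\bar\cD$ because $\Phi_1\in\cC^s(\Rd)$ and $h\in\cC^\alpha(\bar\cD)$. As $\lambda^*(\fL-V_1)>0$ by~\eqref{AP1}, Theorem~\ref{T2.1}(b) applied with potential $-V_1$ and datum $g_\rho$ yields a unique $\underline u\in\cC^{2s+}(\cD)\cap\cC_0(\cD)$ with $\fL\underline u-V_1\underline u=g_\rho$ in $\cD$. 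Since $g_\rho\leq 0$, $\underline u$ is a subsolution of $\fL w-V_1w\leq 0$ and $0$ a supersolution, with $\underline u=0$ in $\cD^c$, so Theorem~\ref{T2.2} gives $\underline u\leq 0$ in $\Rd$. Finally, $\underline u\leq 0$ together with~\eqref{AP2} gives $f(x,\underline u)\geq V_1\underline u-C\geq V_1\underline u-\tilde C$, whence $\fL\underline u=V_1\underline u+g_\rho\leq f(x,\underline u)+\rho\Phi_1+h$, which is the claimed inequality.

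For \textbf{part~(2)} I would use a torsion-type barrier rather than an eigenfunction. Let $v\in\cC^{2s+}(\cD)\cap\cC_0(\cD)$ solve $\fL v=1$ in $\cD$ (Theorem~\ref{T2.1}(b) with $V\equiv 0$, which applies since the principal eigenvalue $\lambda^*(\fL)$ of the Dirichlet fractional Laplacian on the bounded domain $\cD$ is positive); then $v\geq 0$ by Theorem~\ref{T2.2}. Put $M:=2\norm h_\infty+2$ and $\bar u:=Mv$, so $\bar u\geq 0$, $\bar u\in\cC^{2s+}(\cD)\cap\cC_0(\cD)$, and $\fL\bar u\equiv M$. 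Fix $\varepsilon_0>0$ with $M\varepsilon_0\leq 1$ and $L_1\varepsilon_0\leq\tfrac12$, where $L_1$ is the Lipschitz constant of $f(x,\cdot)$ on $[0,1]$, uniform in $x$, provided by Assumption~[AP]\,(1), and split $\cD$ into the boundary layer $A:=\{v<\varepsilon_0\}$ and the compact set $B:=\{v\geq\varepsilon_0\}\Subset\cD$. On $A$ one has $0\leq\bar u<1$, so $\abs{f(x,\bar u)}\leq L_1\bar u\leq L_1M\varepsilon_0\leq M/2$, hence $f(x,\bar u)+\rho\Phi_1+h\leq M/2+\norm h_\infty\leq M=\fL\bar u$ for every $\rho\leq 0$. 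On $B$, $f(x,\bar u)$ is bounded by a constant $F_B$ while $\Phi_1\geq\mu_0:=\min_B\Phi_1>0$, so $f(x,\bar u)+\rho\Phi_1+h\leq F_B+\rho\mu_0+\norm h_\infty\leq M=\fL\bar u$ once $\rho$ is negative enough. Thus there is $\bar{\rho}_1<0$ for which $\bar u$ is the required supersolution whenever $\rho\leq\bar{\rho}_1$, and this argument is the same whether growth condition~(3) or~(3') is assumed.

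For \textbf{part~(3)} I would keep $\underline u$ exactly as in part~(1) and rerun the proof of Theorem~\ref{T2.2}. Given any supersolution $\hat u\in\cC^{2s+}(\cD)\cap\cC_0(\cD)$, set $w:=\underline u-\hat u$; then $w=0$ in $\cD^c$, and subtracting the equation for $\underline u$ from the inequality for $\hat u$ gives $\fL w-V_1w\leq V_1\hat u-f(x,\hat u)-\tilde C$ in $\cD$, pointwise (everything being classical). Suppose $\sup_\cD w>0$. By Proposition~\ref{P3.1} choose $\cD_1\Supset\cD$ for which the principal eigenvalue $\lambda^*_1$ of $\fL-V_1$ is still positive, with positive eigenfunction $\Psi_1$, and let $c_0:=\inf\{c>0:c\Psi_1>w\ \text{in}\ \cD\}$, which is finite and positive; then $c_0\Psi_1-w\geq 0$ on $\Rd$ and vanishes at some $x_0\in\cD$. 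There $\hat u(x_0)=\underline u(x_0)-c_0\Psi_1(x_0)<\underline u(x_0)\leq 0$, so~\eqref{AP2} yields $V_1(x_0)\hat u(x_0)-f(x_0,\hat u(x_0))-\tilde C\leq 0$; since $x_0$ is a global minimum of $c_0\Psi_1-w$, $\fL(c_0\Psi_1)(x_0)\leq\fL w(x_0)$, and using $c_0\Psi_1(x_0)=w(x_0)$ we get $\lambda^*_1c_0\Psi_1(x_0)=\fL(c_0\Psi_1)(x_0)-V_1(x_0)c_0\Psi_1(x_0)\leq\fL w(x_0)-V_1(x_0)w(x_0)\leq 0$, contradicting $\lambda^*_1,c_0,\Psi_1(x_0)>0$. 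Hence $w\leq 0$, i.e.\ $\underline u\leq\hat u$.

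The genuine obstacle is part~(2): the barrier is forced to be positive in $\cD$ yet vanish on $\partial\cD$, so a standard eigenfunction bump would make $\fL\bar u$ vanish at $\partial\cD$ as well, and the forcing term $\rho\Phi_1$ also degenerates there, leaving nothing to dominate $h$ near the boundary. Replacing the eigenfunction by the torsion function $v$ circumvents this, since $\fL v\equiv 1$ stays bounded below by a positive constant \emph{uniformly up to $\partial\cD$}, while $f(x,\bar u)$ is automatically small in the boundary layer because $\bar u$ is; in the interior the largeness of $-\rho\Phi_1$ absorbs the bounded nonlinear term, which is exactly why $\rho$ has to be taken sufficiently negative. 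Parts~(1) and~(3) are then routine bookkeeping with Theorems~\ref{T2.1} and~\ref{T2.2}.
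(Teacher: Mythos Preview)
Your argument for part~(1) is essentially identical to the paper's. For parts~(2) and~(3), however, you take genuinely different routes, and it is worth recording the contrast.

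For part~(2), the paper also builds $\bar u$ as a torsion-type function, solving $\fL\bar u=h^++C_1$, but then controls the boundary layer by comparing boundary rates: from \cite[Th.~1.2]{RS14} one has $\bar u\leq c_1 d^s$, while Hopf's lemma for the fractional Laplacian gives $\Phi_1\geq c_2 d^s$; hence $-\rho\Phi_1\geq C_1\bar u^{\,p}$ throughout $\cD$ once $-\rho$ is large, and the growth bound $f(x,q)\leq C_1(1+q^p)$ (from Assumption~[AP](3) or (3')) finishes the job. Your approach replaces this boundary comparison by the observation that $\fL\bar u\equiv M$ is uniformly positive up to $\partial\cD$, while $f(x,\bar u)$ is automatically small there because $\bar u$ is small and $f(x,0)=0$; in the interior you simply exploit $\min_B\Phi_1>0$. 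Your version avoids Hopf's lemma and does not use the growth hypothesis on $f$ at all, only the local Lipschitz condition, so it is slightly more elementary and more general; the paper's version, on the other hand, gives a quantitative relation between $\bar u$ and $\Phi_1$ via $d^s$ that is useful elsewhere.

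For part~(3), the paper does not rerun the eigenfunction touching argument. Instead it truncates, setting $w=\hat u\wedge 0$, notes that $w$ is a viscosity supersolution of $\fL w-V_1w\geq -C-|\rho|-\|h\|_\infty$ by \eqref{AP2}, and observes that $\underline u$ is a subsolution of the same linear inequality by construction; Theorem~\ref{T2.2} then gives $\underline u\leq w\leq\hat u$ directly. Your route is correct as well: since both $\underline u$ and $\hat u$ are in $\cC^{2s+}(\cD)\cap\cC_0(\cD)$, the difference is classical and the pointwise touching argument with $\Psi_1$ on the enlarged domain $\cD_1$ goes through, the key point being that at the touching point $\hat u(x_0)<0$ so \eqref{AP2} applies. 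The paper's reduction to Theorem~\ref{T2.2} is shorter, but your argument has the virtue of not passing through the viscosity framework for the truncation $\hat u\wedge 0$.
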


\begin{proof}
Consider $\rho\in\RR$. Let $C_2= 2\sup_{\bar\cD}\abs{h} + 2\abs{\rho} + C $, where $C$ is the same constant as in
\eqref{AP2}-\eqref{AP3}. Since $\lambda^*(-L-V_1)>0$ by \eqref{AP1}, it follows from Theorem~\ref{T2.1}(2) that
there exists a unique $\underline{u}\in\cC^{2s+}(\cD)\cap\cC_0(\cD)$ satisfying
\begin{equation}\label{EL3.3A}
-L\underline{u} - V_1 \underline{u}= -C_2+h(x)+ \rho\Phi_1\quad \text{in}\; \cD.
\end{equation}
Recalling that $\Phi_1\in \cC^\alpha(\Rd)\cap\cC_0(\cD)$ by \cite[Th.~2.6]{B18}, the right hand side of the \eqref{EL3.3A} is
H\"{o}lder continuous in $\cD$. By our choice of $C_2$ we see that
$$
-L\underline{u} - V_1 \underline{u}\leq 0,
$$
and hence, by Theorem~\ref{T2.2} we have $\underline{u}\leq 0$ in $\Rd$. Therefore, by making use of \eqref{AP2}
we get that
$$
-L\underline{u} \leq f(x, \underline{u}) + h(x)+ \rho\Phi_1\quad \text{in}\; \cD, \quad \text{and}\quad
\underline{u}=0\quad \text{in}\; \cD^c.
$$
This proves part (1).

Now we proceed to establish (2). Due to Assumption [AP] there exists a constant $C_1$ satisfying $f(x, q)\leq C_1 (1+q^p)$,
for all $(x, q)\in \bar\cD\times[0, \infty)$. We consider the unique function $\bar{u}\in\cC^{2s+}(\cD)\cap\cC_0(\cD)$
satisfying
\begin{equation}\label{EL3.3B}
-L\bar{u}-h^+-C_1=0\quad \text{in}\; \cD.
\end{equation}
Using \cite[Th.~2.6, Eqn (2.3)]{B18} we find $c_1 = c_1(d, s, \cD) > 0$, such that
\begin{equation}\label{EL3.3C}
\sup_{x\in\cD}\frac{\abs{\bar u(x)}}{d^s(x)}\leq c_1,
\end{equation}
where $d(\cdot)$ is the distance function from the boundary of $\cD$. Since $-L\bar{u}\geq 0$, it also follows from Hopf's
lemma \cite[Th.~2.4]{B18} that $\bar{u}>0$ in $\cD$. Since $-L\Phi_1=\lambda^*_0\Phi_1\geq 0$ in $\cD$, another application
of Hopf's lemma gives a constant $c_2>0$ satisfying
$$
\frac{\Phi_1(x)}{d^s(x)}\geq c_2, \quad x\in \cD.
$$
Combining the above with \eqref{EL3.3C} and choosing $-\bar\rho_1>0$ large, we find for every $\rho\leq \bar\rho_1$ that
$$
- \rho\Phi_1\geq C_1c^p_1 d^{sp}\geq C_1 \bar{u}^p, \quad \text{for}\; x\in\cD.
$$
Hence by \eqref{EL3.3B} we have for $\rho\leq \rho_0$
$$
-L\bar{u}\geq f(x, \bar{u}) + \rho\Phi + h\quad \text{in}\; \cD.
$$
This proves (2).

Now we come to (3). Note that
$$-L\hat{u}\geq f(x, \hat{u}) - \abs{\rho} - \norm{h}_\infty \quad \text{in}\; \cD.$$
Since the minimum of two viscosity super-solutions is again a viscosity super-solution, we note that $w= \hat{u} \wedge 0$
is a viscosity super-solution  of
\begin{equation}\label{EL3.3D}
-L w\geq f(x, w)- \abs{\rho} - \norm{h}_\infty \geq V_1w -C- \abs{\rho} - \norm{h}_\infty \quad \text{in}\; \cD,
\end{equation}
by \eqref{AP2}. On the other hand, by our choice of $C_2$ in \eqref{EL3.3A} we have
\begin{equation}\label{EL3.3E}
-L \underline{u}-V_1\underline{u}\leq -C -|\rho|-\norm{h}_\infty \quad \text{in}\; \cD.
\end{equation}
Combining \eqref{EL3.3D}, \eqref{EL3.3E} and \cite[Th.~5.9]{CS09}, we obtain
$$
-L(w-\underline{u}) - V_1(w-\underline{u})\geq 0\quad \text{in}\; \cD,
$$
in viscosity sense, and $w-\underline{u}=0$ in $\cD^c$. Hence by Theorem~\ref{T2.2} we have $w\geq \underline{u}$ in $\Rd$,
implying $\hat{u}\geq w\geq \underline{u}$ in $\Rd$. This yields part (3).
\end{proof}

Using Lemma~\ref{L3.3} we can now prove the existence of a minimal solution.
\begin{lemma}\label{L3.4}
For $\rho\leq \bar\rho_1$, where $\bar\rho_1$ is the same value as in Lemma~\ref{L3.3}, there exists $u\in\cC^{2s+}(\cD)
\cap\cC_0(\cD)$ satisfying
\begin{equation}\label{EL3.4A}
\fL u = f(x, u) + \rho \Phi_1 + h(x) \quad \text{in}\; \cD.
\end{equation}
Moreover, the above $u$ can be chosen to be minimal in the sense that if $\tilde u\in\cC^{2s+}(\cD)\cap\cC_0(\cD)$ is another
solution of \eqref{EL3.4A}, then $\tilde u \geq u$ in $\Rd$.
\end{lemma}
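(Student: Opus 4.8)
The plan is to construct the minimal solution by a monotone iteration between the sub-solution $\underline{u}$ and the super-solution $\bar{u}$ furnished by Lemma~\ref{L3.3}, organised so that the weak maximum principle of Theorem~\ref{T2.2} can be invoked at each step. I would fix $\rho\le\bar\rho_1$, take $\underline{u},\bar{u}\in\cC^{2s+}(\cD)\cap\cC_0(\cD)$ from Lemma~\ref{L3.3} (so $\underline{u}\le 0\le\bar{u}$ in $\cD$, and $\underline{u}\le\bar{u}$ by part~(3) of that lemma), set $I=\bigl[\inf_{\cD}\underline{u},\,\sup_{\cD}\bar{u}\bigr]$, and use the local Lipschitz continuity of $f(x,\cdot)$, uniform in $x$, to pick $\kappa>0$ with $q\mapsto f(x,q)+\kappa q$ non-decreasing on $I$ for all $x\in\bar\cD$. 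Since the constant potential $\kappa$ satisfies $\lambda^*(\fL+\kappa)=\lambda^*_0+\kappa>0$, Theorem~\ref{T2.1}(b) and Theorem~\ref{T2.2} apply to $\fL+\kappa$, and one can define $u_0=\underline{u}$ and, inductively, the unique $u_{n+1}\in\cC^{2s+}(\cD)\cap\cC_0(\cD)$ solving
\begin{equation*}
\fL u_{n+1}+\kappa u_{n+1}=f(x,u_n)+\kappa u_n+\rho\Phi_1+h\quad\text{in }\cD,\qquad u_{n+1}=0\quad\text{in }\cD^c;
\end{equation*}
this is legitimate because $\Phi_1,u_n\in\cC^s(\Rd)$ by \cite{RS14} and the assumed regularity of $f$ makes the right-hand side $\cC^\beta(\bar\cD)$ for some $\beta>0$.

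I would then prove by induction that $\underline{u}=u_0\le u_1\le\cdots\le\bar{u}$ with every $u_n$ valued in $I$: the sub-solution inequality for $\underline{u}$ gives $\fL u_0+\kappa u_0\le\fL u_1+\kappa u_1$ in $\cD$ and $u_1-u_0=0$ in $\cD^c$, hence $u_1\ge u_0$ by Theorem~\ref{T2.2}; the super-solution inequality for $\bar{u}$, with $\bar{u}\ge u_0$ and the monotonicity of $q\mapsto f(x,q)+\kappa q$ on $I$, gives $\bar{u}\ge u_1$; and the same two comparisons, applied to $u_{n-1}\le u_n$, close the induction. So $u_n\uparrow u$ pointwise with $\underline{u}\le u\le\bar{u}$. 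To pass to the limit I would observe that $g_n:=f(\cdot,u_n)+\kappa u_n+\rho\Phi_1+h$ is bounded in $L^\infty(\cD)$ uniformly in $n$, so by \cite{RS14} the $u_n$ are bounded in $\cC^s(\Rd)$; then Arzel\`{a}--Ascoli (or Dini) makes $u_n\to u$ uniform, $u\in\cC^s(\Rd)\cap\cC_0(\cD)$, and $g_n\to g_\infty:=f(\cdot,u)+\kappa u+\rho\Phi_1+h$ uniformly. Passing to the limit (each $u_n$ being a semigroup solution by Lemma~\ref{L3.1}, or using stability of viscosity solutions under uniform convergence) yields $\fL u+\kappa u=g_\infty$ in $\cD$, whereupon $g_\infty\in\cC^\beta(\bar\cD)$ and \cite[Prop.~1.4]{RS14} give $u\in\cC^{2s+}(\cD)$, so $u$ solves \eqref{EL3.4A}.

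For minimality, given any solution $\tilde{u}\in\cC^{2s+}(\cD)\cap\cC_0(\cD)$ of \eqref{EL3.4A} I would note that $\tilde{u}$ is in particular a super-solution, so $\underline{u}\le\tilde{u}$ in $\Rd$ by Lemma~\ref{L3.3}(3), and that $\hat{u}:=\tilde{u}\wedge\bar{u}$, being a minimum of two viscosity super-solutions, is itself a super-solution with $\underline{u}\le\hat{u}\le\bar{u}$ and hence valued in $I$; running the induction above with $\bar{u}$ replaced by $\hat{u}$ then gives $u_n\le\hat{u}$ for all $n$, so $u\le\hat{u}\le\tilde{u}$. I expect the main obstacle to be exactly the super-linear case~(3$'$): there no global shift $\kappa$ can make $f(x,\cdot)+\kappa(\cdot)$ monotone, so the whole scheme must be confined to the bounded range $I$ determined by $\underline{u}$ and $\bar{u}$, and this is also what forces the truncation $\tilde{u}\wedge\bar{u}$ in the minimality step, since an arbitrary solution need not be $I$-valued. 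The remaining ingredients --- the uniform $\cC^s$ bound, the limit passage, and the $\cC^{2s+}$ bootstrap --- should be routine given Theorems~\ref{T2.1}--\ref{T2.2}, Lemma~\ref{L3.1}, and the regularity theory of \cite{RS14}.
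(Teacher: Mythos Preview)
Your proposal is correct and follows essentially the same route as the paper: monotone iteration from $\underline{u}$, with the nonlinearity shifted by a Lipschitz constant on the order interval $[\min\underline{u},\max\bar{u}]$, uniform $\cC^s$ bounds from \cite{RS14}, passage to the limit by stability, and a $\cC^{2s+}$ bootstrap. The paper's minimality argument is terser---it simply says ``$\bar{u}$ can be replaced by $\tilde{u}$'' and reruns the iteration---whereas your truncation $\hat{u}=\tilde{u}\wedge\bar{u}$ is a slightly more careful variant that keeps the same shift $\kappa$ and avoids having to argue that the limit is independent of the Lipschitz constant chosen; both are standard and neither is a genuinely different method.
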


\begin{proof}
The proof is based on the standard monotone iteration method. Denote $m=\min_{\bar\cD} \underline{u}$ and
$M=\max_{\bar\cD}\bar{u}$. Let $\theta>0$ be a Lipschitz constant for $f(x, \cdot)$ on the interval $[m, M]$, i.e.,
$$
\abs{f(x, q_1)-f(x, q_2)}\leq \theta \abs{q_1-q_2}\quad \text{for}\; q_1, q_2\in [m, M], \; x\in\bar\cD.
$$
Denote $F(x, u)= f(x, u) + \rho\Phi(x)+h(x)$. Consider the solutions of the following family of problems:
\begin{equation}\label{EL3.4B}
\begin{split}
-L u^{(n+1)} + \theta u^{(n+1)} &= F(x, u^{(n)}) + \theta u^{(n)}\quad \text{in}\; \cD,
\\
u^{(n+1)} &= 0 \quad \text{in}\; \cD^c.
\end{split}
\end{equation}
By Theorem~\ref{T2.1}(2) equation \eqref{EL3.4B} has a unique solution, provided $u^{(n)}$ is H\"{o}lder continuous in $\bar\cD$. We
set $u^{(0)}=\underline{u}$. Since $u^{(0)}\in\cC^\alpha(\Rd)$ by \cite[Th.~2.6]{B18}, it follows from \cite{Serra15} that $u^{(1)} \in\cC^{2s+}(\cD)\cap\cC^\alpha(\Rd)$.
Thus by successive iteration it follows that $u^{(n)} \in\cC^{2s+}(\cD)\cap\cC^\alpha(\Rd)$, for all $n\geq 0$. Hence all solutions of
\eqref{EL3.4B} are classical solutions. Again, it is routine to check from \eqref{EL3.4B} and Theorem~\ref{T2.2} that $u^{(0)}\leq
u^{(n)}\leq u^{(n+1)}\leq \bar{u}$ in $\cD$. This implies $\sup_{\Rd}\abs{u^{(n)}}\leq M-m$, for all $n$. Thus applying
\cite[Th.~2.6]{B18} we obtain
$$
\sup_{n\in\mathbb N}\; \norm{u^{(n)}}_{\cC^\alpha(\Rd)}\leq \kappa_1,
$$
for some constants $\alpha, \kappa_1$. Hence there exists $u\in\cC^\alpha(\Rd)\cap\cC_0(\cD)$ such that $u^{(n)}\to u$ in $\cC_0(\cD)$ as $n\to\infty$.
Using the stability of viscosity solutions, it then follows that $u$ is a viscosity solution to
\begin{equation*}
\begin{split}
-L u &= F(x, u) \quad \text{in}\; \cD,
\\
u &= 0 \quad \text{in}\; \cD^c.
\end{split}
\end{equation*}
We can now apply the regularity estimates from \cite{Serra15} to show that $u\in\cC^{2s+}(\cD)$.

To establish minimality we consider a solution $\tilde u$ of \eqref{EL3.4A} in $\cC^{2s+}(\cD)\cap\cC_0(\cD)$.
From Lemma~\ref{L3.3}(3) we have $\underline{u}\leq \tilde u$ in $\Rd$. Thus $\bar{u}$ can be replaced by
$\tilde u$, and the above argument shows that $u\leq \tilde u$.
\end{proof}

Now we derive a priori bounds on the solutions of \eqref{E-AP}. Our first result bounds the negative part of solutions
$u$ of \eqref{E-AP}. We recall that under the standing assumptions on $f$, any viscosity solution of \eqref{E-AP} is an
element of $\cC^{2s+}(\cD)\cap\cC_0(\cD)$, and thus also a classical solution.

\begin{lemma}\label{L3.5}
Let Assumption [AP](2) hold. There exists a constant $\kappa = \kappa(d, s, \cD, V_1)$, such that for every solution $u$
of \eqref{E-AP} with $\rho \geq -\hat\rho$, $\hat\rho>0$, we have
$$
\sup_{\cD}\abs{u^-}\;\leq\; \kappa (C+\hat\rho+\norm{h}_\infty),
$$
where $C$ is the same constant as in \eqref{AP2}.
\end{lemma}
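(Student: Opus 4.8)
The plan is to bound $u^-$ from above by comparing $u$ with the negative solution of a linear Dirichlet problem with potential $V_1$, using that $\lambda^*(\fL-V_1)>0$ together with the refined weak maximum principle of Theorem~\ref{T2.2}. Write $M = C+\hat\rho+\norm{h}_\infty$. First I would invoke Theorem~\ref{T2.1}(b), applied with potential $-V_1$ (legitimate since $\lambda^*(\fL-V_1)>0$ by \eqref{AP1}), to obtain the unique $\phi_0\in\cC^{2s+}(\cD)\cap\cC(\Rd)$ with $\fL\phi_0-V_1\phi_0=-1$ in $\cD$ and $\phi_0=0$ in $\cD^c$; comparing the subsolution $\phi_0$ of $\fL\phi_0-V_1\phi_0\le 0$ with the supersolution $0$ via Theorem~\ref{T2.2} gives $\phi_0\le 0$ in $\Rd$. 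Setting $\phi = M\phi_0$ we get $\fL\phi-V_1\phi=-M$ in $\cD$, $\phi=0$ in $\cD^c$, $\phi\le 0$, and $\sup_\cD\abs{\phi}=M\norm{\phi_0}_\infty$; the asserted $\kappa$ will be $\norm{\phi_0}_\infty$, which by construction depends only on $d,s,\cD,V_1$.

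The central step is to check that $w\df u\wedge 0$ is a viscosity supersolution of $\fL w-V_1 w\ge -M$ in $\cD$. To verify this at a point $x\in\cD$, I would take a $\cC^2$ test function $\varphi$ touching $w$ from below at $x$ in a small ball, let $\varphi_w$ be the function equal to $\varphi$ on that ball and to $w$ outside (so $\varphi_w\le w\le u$ on $\Rd$ with equality at $x$), and split into cases. If $u(x)\ge 0$, then $w(x)=0$ and $\varphi_w\le 0$, hence $\fL\varphi_w(x)\ge 0 = V_1(x)w(x)$, so $\fL\varphi_w(x)-V_1(x)w(x)\ge 0\ge -M$. If $u(x)<0$, then $w(x)=u(x)$ and $\varphi_w$ lies below $u$ with equality at $x$; since $u$ solves \eqref{E-AP}, the elementary monotonicity of $(-\Delta)^s$ at the contact point gives $\fL\varphi_w(x)\ge \fL u(x)=f(x,u(x))+\rho\Phi_1(x)+h(x)$, and then \eqref{AP2} (applicable because $u(x)<0$), together with $\rho\Phi_1(x)\ge -\hat\rho$ (as $-\hat\rho\le\rho$ and $0\le\Phi_1\le 1$) and $h(x)\ge-\norm{h}_\infty$, yields $\fL\varphi_w(x)\ge V_1(x)u(x)-C-\hat\rho-\norm{h}_\infty = V_1(x)w(x)-M$. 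This establishes the claim.

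With this in hand, I would apply \cite[Th.~5.9]{CS09} to conclude that $\zeta\df\phi-w$ is a viscosity subsolution of $\fL\zeta-V_1\zeta\le (-M)-(-M)=0$ in $\cD$, observe that $\zeta=0$ in $\cD^c$, and invoke Theorem~\ref{T2.2} once more (again using $\lambda^*(\fL-V_1)>0$) to get $\zeta\le 0$ in $\Rd$, i.e.\ $\phi\le w\le u$. Then $u^-=(-u)^+\le(-\phi)^+\le\abs{\phi}$, so $\sup_\cD\abs{u^-}\le\sup_\cD\abs{\phi}=\kappa M=\kappa(C+\hat\rho+\norm{h}_\infty)$, which is the assertion.

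The step I expect to be delicate is the supersolution claim for $w=u\wedge 0$: because Assumption~[AP](2) only furnishes the one-sided bound \eqref{AP2} for $q\le 0$, one cannot argue with $u$ directly but must pass to $u\wedge 0$ and treat the interface $\{u=0\}$ with care, and this rests on the (elementary but essential) fact that lowering a function away from a point while keeping its value there cannot decrease $(-\Delta)^s$ at that point. The remaining pieces are a routine assembly of Theorems~\ref{T2.1}(b) and \ref{T2.2}, the stability/difference result \cite[Th.~5.9]{CS09}, and the linearity of the comparison problem, which is precisely what makes $\kappa$ independent of $\rho$, $h$ and $C$.
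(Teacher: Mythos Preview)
Your proof is correct and follows essentially the same route as the paper: show that $w=u\wedge 0$ is a viscosity supersolution of $\fL w - V_1 w \ge -M$, compare it via Theorem~\ref{T2.2} with the solution of the linear problem $\fL v - V_1 v = -M$, and read off the bound. The only cosmetic differences are that the paper states the supersolution property of $u\wedge 0$ without the case analysis (invoking that the minimum of supersolutions is a supersolution) and bounds the comparison function by citing the ABP-type estimate \cite[Th.~4.7]{BL17b}, whereas you obtain $\kappa=\norm{\phi_0}_\infty$ by solving once with right-hand side $-1$ and scaling; also, since your $\phi$ is classical, the appeal to \cite[Th.~5.9]{CS09} is not strictly needed.
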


\begin{proof}
First observe that if $u$ is a solution to \eqref{E-AP} for some $\rho\geq -\hat\rho$, then
$$
L u + f(x, u)\leq \hat\rho + \norm{h}_\infty\quad \text{in}\; \cD.
$$
Defining $w=u \wedge 0$ we see that $w$ is a viscosity super-solution of the above equation, i.e.,
$$
L w + f(x, w)\leq \hat\rho + \norm{h}_\infty\quad \text{in}\; \cD, \quad \text{and}\quad w=0\quad \text{in}\; \cD^c.
$$
From \eqref{AP2} it then follows that
$$
L w + V_1 w\leq C+\hat\rho + \norm{h}_\infty\quad \text{in}\; \cD, \quad \text{and}\quad w=0\quad \text{in}\; \cD^c,
$$
in viscosity sense. Let $v\in\cC^{2s+}(\cD)\cap\cC_0(\cD)$ be the unique solution of
$$
L v + V_1 v= C+\hat\rho + \norm{h}_\infty\quad \text{in}\; \cD, \quad \text{and}\quad v=0\quad \text{in}\; \cD^c.
$$
Existence follows from Theorem~\ref{T2.1}(2). Applying Theorem~\ref{T2.2}, we get $-w\leq -v$ in $\Rd$. Since $v$ is also
a semigroup solution by Lemma~\ref{L3.1}, we obtain from \cite[Th.~4.12 and Rem.~3.9]{BL17b} that with a constant $\kappa = \kappa(s,d,\cD,V_1)$

$$
\sup_{x\in\bar\cD}\abs{v}\;\leq\; \kappa (C+\hat\rho + \norm{h}_\infty)
$$
holds. Thus $u^-=-w\leq \kappa (C+\hat\rho + \norm{h}_\infty)$, for $x\in\cD$, and the result follows.
\end{proof}

Our next result provides a lower bound on the growth of the solution for large $\rho$.

\begin{lemma}\label{L3.6}
Let Assumption [AP](1)-(2) hold.
For every $\hat\rho>0$ there exists $C_3>0$ such that for every solution $u$ of \eqref{E-AP} with $\rho\geq -\hat\rho$
we have
$$
\rho^+\leq C_3(1+\norm{u^+}_\infty) \leq C_3(1+\norm{u}_\infty).
$$
\end{lemma}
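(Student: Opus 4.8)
The plan is to pair equation \eqref{E-AP} with the Dirichlet principal eigenfunction $\Phi_1$ of $\fL$ in $\cD$ — which satisfies $\fL\Phi_1=\lambda^*_0\Phi_1$ in $\cD$, $\Phi_1=0$ in $\cD^c$, $\Phi_1>0$ in $\cD$, $\norm{\Phi_1}_\infty=1$, with $\lambda^*_0>0$ — and to exploit the self-adjointness of $\fL$ under the Dirichlet condition. Writing $G\df f(\cdot,u)+\rho\Phi_1+h$, which is bounded and H\"older continuous on $\cD$ (recall that a solution of \eqref{E-AP} lies in $\cC^{2s+}(\cD)\cap\cC_0(\cD)$), one multiplies $\fL u=G$ by $\Phi_1$ and integrates over $\cD$. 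Since $u$ and $\Phi_1$ both vanish on $\cD^c$, this gives
$$\lambda^*_0\int_\cD u\,\Phi_1\,\D x \;=\; \int_\cD f(x,u)\,\Phi_1\,\D x \;+\; \rho\int_\cD\Phi_1^2\,\D x \;+\; \int_\cD h\,\Phi_1\,\D x .$$
The identity can be justified either from the quadratic-form relation $\int_\cD(\fL u)\Phi_1\,\D x=\int_\cD(\fL\Phi_1)u\,\D x$ (valid because both functions belong to $H^s(\Rd)$, being solutions of Dirichlet problems for $\fL$ with $L^2$ data), or, more in the spirit of the rest of the paper, by representing $u$ through the Feynman--Kac semigroup as in Lemma~\ref{L3.1}, i.e.\ $u=\int_0^\infty T^{\cD,0}_sG\,\D s$, and using the symmetry of the killed transition kernel together with $T^{\cD,0}_s\Phi_1=e^{-\lambda^*_0 s}\Phi_1$ from \eqref{E3.1}.

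Given the identity, I would solve for $\rho$ and bound the right-hand side from above, the hypothesis $\rho\ge-\hat\rho$ entering only through Lemma~\ref{L3.5}. Set $c_\Phi\df\int_\cD\Phi_1^2\,\D x>0$, depending only on $d,s,\cD$. Since $\int_\cD u^-\Phi_1\ge 0$, one has $\int_\cD u\Phi_1\le\norm{u^+}_\infty\abs{\cD}$. By \eqref{AP2}--\eqref{AP3}, $f(x,u)\ge -c_V\abs{u}-C$ on $\cD$ with $c_V\df\norm{V_1}_\infty\vee\norm{V_2}_\infty$, so $-\int_\cD f(x,u)\Phi_1\le c_V(\norm{u^+}_\infty+\norm{u^-}_\infty)\abs{\cD}+C\abs{\cD}$, and $-\int_\cD h\Phi_1\le\norm{h}_\infty\abs{\cD}$. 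Finally, Lemma~\ref{L3.5} applied with the given $\hat\rho$ bounds $\norm{u^-}_\infty$ by $\kappa(C+\hat\rho+\norm{h}_\infty)$, a quantity depending only on $d,s,\cD,V_1,h,\hat\rho$. Collecting terms,
$$\rho\,c_\Phi \;\le\; (\lambda^*_0+c_V)\abs{\cD}\,\norm{u^+}_\infty \;+\; c_V\abs{\cD}\,\kappa(C+\hat\rho+\norm{h}_\infty)\;+\;(C+\norm{h}_\infty)\abs{\cD},$$
which yields $\rho^+\le C_3(1+\norm{u^+}_\infty)$ for a constant $C_3$ depending only on $\hat\rho$ and the data $d,s,\cD,V_1,V_2,h$; the bound $\norm{u^+}_\infty\le\norm{u}_\infty$ is immediate.

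The only delicate point is the integration-by-parts identity in the first display: a solution of \eqref{E-AP} is a priori only of class $\cC^{2s+}(\cD)\cap\cC_0(\cD)$, so one must justify the pairing against $\Phi_1$ either by identifying $u$ with the variational solution (unique, hence it coincides with the classical one by Theorem~\ref{T2.1}(b) applied with $V=0$, and it belongs to $H^s(\Rd)$), or by the semigroup route indicated above, which reduces the matter to Fubini's theorem and the symmetry of the Dirichlet kernel of the stable process. Everything else is a routine chain of elementary estimates, the sole purpose of the restriction $\rho\ge-\hat\rho$ being to supply Lemma~\ref{L3.5} with the hypothesis it needs in order to control $\int_\cD u^-\Phi_1$.
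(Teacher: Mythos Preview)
Your proof is correct and takes a genuinely different route from the paper's. You obtain the bound by an integral argument: pair \eqref{E-AP} with the principal eigenfunction $\Phi_1$, use self-adjointness (justified either variationally or, as you note, via the symmetric killed semigroup and \eqref{E3.1}) to replace $\int(\fL u)\Phi_1$ by $\lambda^*_0\int u\Phi_1$, then solve for $\rho$ and estimate each term using \eqref{AP2}--\eqref{AP3} and Lemma~\ref{L3.5}.

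The paper instead works pointwise. It sets $\varphi=u-\tfrac{\rho}{\lambda^*_0}\Phi_1$, so that $\fL\varphi=f(\cdot,u)+h$ in $\cD$; the lower bounds \eqref{AP2}--\eqref{AP3} together with Lemma~\ref{L3.5} give $\fL(-\varphi)\le C_4(1+u^+)$, and the authors' own ABP-type estimate \cite[Th.~4.7]{BL17b} then yields $\sup_\cD(-\varphi)^+\le C_5C_4(1+\norm{u^+}_\infty)$. Evaluating at a point where $\Phi_1(x)=1$ produces $\tfrac{\rho}{\lambda^*_0}-u(x)\le(-\varphi(x))^+$, hence the bound on $\rho$.

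Both arguments use Lemma~\ref{L3.5} in the same way to control $u^-$. Your approach is arguably more elementary---it avoids the ABP machinery and needs only the symmetry of the Dirichlet kernel and H\"older's inequality---but it does require the integration-by-parts identity, which sits slightly outside the viscosity/semigroup framework the paper is built on; you handle this adequately via either the $H^s$ identification or the semigroup computation. The paper's approach has the advantage of staying entirely within its pointwise/probabilistic toolkit, showcasing the ABP estimate from \cite{BL17b} that the authors developed precisely for such purposes.
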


\begin{proof}
Let $\varphi=u-\frac{\rho}{\lambda^*_0}\Phi_1$. Then we have $\varphi\in\cC^{2s+}(\cD)\cap\cC_0(\cD)$. Also,
\begin{eqnarray*}
-L\varphi (x)
&=&
f(x, u)+\rho\Phi_1+h-\rho\Phi_1 \\
&=&
f(x, u)-h\geq f(x, u^+)+f(x, -u^-)-\norm{h}_\infty \geq -C_4(1+ u^+(x)),
\end{eqnarray*}
with a constant $C_4 = C_4(\norm{h}_\infty,\norm{V_2}_\infty, C, \hat\rho)$, where in the last estimate we used Lemma~\ref{L3.5} and \eqref{AP3}.
Thus
$$
L\varphi\leq C_4(1+ u^+)\quad \text{in}\; \cD.
$$
By an application of \cite[Th.~4.12 and Rem~3.9]{BL17b} it then follows that with a constant $C_5$,
$$
\sup_{\cD} (-\varphi)^+\leq C_5 C_4 (1+ \norm{u^+}_\infty)
$$
holds. Pick $x\in\cD$ such that $\Phi_1(x)=1$; this is possible since $\norm{\Phi_1}_\infty=1$ by assumption.
It gives
$$
\frac{\rho}{\lambda^*_0}-u(x) \leq (-\varphi(x))^+\leq C_5 C_4 (1+ \norm{u^+}_\infty),
$$
which, in turn, implies
$$
\rho\leq \lambda^*_0 \left( C_4 C_5  +(1+C_4 C_5)\norm{u^+}_\infty\right),
$$
proving the claim.
\end{proof}

One may notice that we have not used the second condition in \eqref{AP1} so far. The next result makes use of this condition to
establish an upper bound on the growth of $u$.

\begin{lemma}\label{L3.7}
Let Assumption [AP](3) hold. For every $\hat\rho>0$ there exists $C_0$ such that for every solution $u$ of \eqref{E-AP}, for
$\rho\geq -\hat\rho$ we have
\begin{equation}\label{EL3.70}
\norm{u}_\infty\leq C_0.
\end{equation}
In particular, there exists $\rho_2>0$ such that \eqref{E-AP} does not have any solution for $\rho\geq \rho_2$.
\end{lemma}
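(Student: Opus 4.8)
The plan is to establish the a priori bound \eqref{EL3.70} by a rescaling (blow‑up) argument and then read off the non‑existence statement. The control of the negative part is already in hand: by Lemma~\ref{L3.5} every solution $u$ of \eqref{E-AP} with $\rho\geq-\hat\rho$ satisfies $\sup_\cD u^-\leq K_0:=\kappa(C+\hat\rho+\norm{h}_\infty)$, a constant depending only on $\hat\rho$ and the data, so it suffices to bound $\norm{u^+}_\infty$. I would argue by contradiction: assume there exist solutions $u_n$ of $(P_{\rho_n})$ with $\rho_n\geq-\hat\rho$ and $t_n:=\norm{u_n}_\infty\to\infty$. By Lemma~\ref{L3.5} we have $\norm{u_n^-}_\infty\leq K_0$, hence $\norm{u_n^+}_\infty=t_n$ for $n$ large, and by Lemma~\ref{L3.6} we have $\rho_n\leq C_3(1+t_n)$. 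Put $v_n=u_n/t_n$; then $\norm{v_n}_\infty=1$, $\norm{v_n^-}_\infty\to0$, and $\fL v_n=g_n$ in $\cD$, $v_n=0$ in $\cD^c$, where $g_n=t_n^{-1}\bigl(f(x,u_n)+\rho_n\Phi_1+h\bigr)$. Since $f$ has at most linear growth by Assumption~[AP](3), $\abs{f(x,u_n)}\leq C(1+t_n)$, and combined with the bound on $\rho_n$ this gives $\norm{g_n}_{L^\infty(\cD)}\leq C_7$ uniformly in $n$. Applying the regularity estimate \cite[Prop.~1.1]{RS14} to $v_n$ then bounds $\norm{v_n}_{\cC^s(\Rd)}$ uniformly in $n$; as the $v_n$ are supported in $\bar\cD$, a subsequence converges uniformly to some $v$ with $v\in\cC^s(\Rd)$, $v\geq0$, $v=0$ in $\cD^c$ and $\norm{v}_\infty=1$. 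Since $v$ vanishes on $\partial\cD$, its maximum is attained at an interior point of $\cD$, so $v$ is strictly positive on a nonempty open subset of $\cD$.

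The crucial step is to test the equation for $v_n$ against the principal eigenfunction $\Psi_2\in\cC^{2s+}(\cD)\cap\cC_0(\cD)$ of $\fL-V_2$, which exists and is positive in $\cD$ by Theorem~\ref{T2.1}(a) applied with potential $-V_2$, and satisfies $\fL\Psi_2-V_2\Psi_2=\lambda^*(\fL-V_2)\Psi_2$ in $\cD$. Since $v_n$ and $\Psi_2$ both belong to $\cC^{2s+}(\cD)\cap\cC_0(\cD)$ and $\fL$ is self‑adjoint, integration by parts (using $v_n=\Psi_2=0$ in $\cD^c$ and the two equations) gives $\int_\cD g_n\Psi_2\,\D{x}=\int_\cD(\fL v_n)\Psi_2\,\D{x}=\int_\cD v_n(\fL\Psi_2)\,\D{x}=\int_\cD v_n\bigl(V_2+\lambda^*(\fL-V_2)\bigr)\Psi_2\,\D{x}$, that is,
\begin{equation*}
\int_\cD\bigl(g_n-V_2v_n\bigr)\Psi_2\,\D{x}\;=\;\lambda^*(\fL-V_2)\int_\cD v_n\Psi_2\,\D{x}.
\end{equation*}
Now the left‑hand integrand is bounded below by $-\varepsilon_n$ with $\varepsilon_n\to0$: on $\{u_n\geq0\}$ one has $f(x,u_n)-V_2u_n\geq-C$ by \eqref{AP3}; on $\{u_n<0\}$, where $\abs{u_n}\leq K_0$ by Lemma~\ref{L3.5}, the quantity $f(x,u_n)-V_2u_n$ is bounded below by a constant determined by $K_0$ and the data (by continuity of $f$); and $\rho_n\Phi_1\geq-\hat\rho$, $h\geq-\norm{h}_\infty$, while one divides by $t_n\to\infty$. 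Hence $\lambda^*(\fL-V_2)\int_\cD v_n\Psi_2\,\D{x}\geq-\varepsilon_n\int_\cD\Psi_2\,\D{x}$; letting $n\to\infty$, using $v_n\to v$ uniformly and $\int_\cD v\,\Psi_2\,\D{x}>0$ (because $v\geq0$ is positive on an open subset of $\cD$ and $\Psi_2>0$ in $\cD$), I obtain $\lambda^*(\fL-V_2)\geq0$, contradicting \eqref{AP1}. This proves \eqref{EL3.70}.

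Finally, the non‑existence statement follows at once: applying \eqref{EL3.70} with, say, $\hat\rho=1$ yields a constant $C_0$ with $\norm{u}_\infty\leq C_0$ for every solution of \eqref{E-AP} with $\rho\geq-1$, and then Lemma~\ref{L3.6} forces $\rho^+\leq C_3(1+C_0)$ for any such solution; thus, taking $\rho_2$ to be any number larger than $C_3(1+C_0)$ (and, if desired, larger than $\rho_1$), problem \eqref{E-AP} has no solution for $\rho\geq\rho_2$. The step I expect to be delicate is the limit passage in the blow‑up: one needs sufficient compactness of $\{v_n\}$---supplied by the $\cC^s$ estimate, which in turn uses both the linear growth of $f$ and the fact that the $u_n$ are genuine two‑sided solutions (not merely subsolutions, which may have arbitrarily large sup)---to guarantee that the limit $v$ is nontrivial, and one uses crucially that upon rescaling the additive constant in the differential inequality for $u_n^+$ disappears, so that the limiting integral identity is precisely the one incompatible with $\lambda^*(\fL-V_2)<0$.
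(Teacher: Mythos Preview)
Your proof is correct and reaches the same contradiction $\lambda^*(\fL-V_2)\geq 0$ as the paper, but by a genuinely different mechanism. The paper stays within its probabilistic framework: from \eqref{AP3} it writes $\fL v_n - V_2 v_n \geq G_n$ with $G_n\to 0$ uniformly, invokes Lemma~\ref{L3.1} to recast this as a semigroup super-solution inequality, passes to the limit to obtain $v(x)\geq \Exp^x\bigl[e^{\int_0^t V_2(X_s)\,\D s}\,v(X_t)\,\Ind_{\{t<\uptau_\cD\}}\bigr]$ for all $t\geq 0$, observes that this forces $v>0$ in all of $\cD$, and then applies \cite[Prop.~4.1]{BL17b} to conclude $\lambda^*(\fL-V_2)\geq 0$. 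Your route instead exploits the self-adjointness of $\fL$: you pair the exact equation $\fL v_n = g_n$ with the principal eigenfunction $\Psi_2$ of $\fL-V_2$, use symmetry to convert $\int_\cD(\fL v_n)\Psi_2$ into $\int_\cD v_n(\fL\Psi_2)$, and read off the sign directly from $\int_\cD v\,\Psi_2>0$. Your argument is more elementary and needs only that $v$ be positive on \emph{some} open subset (immediate from $\norm{v}_\infty=1$ and continuity), whereas the paper's argument yields the stronger conclusion $v>0$ everywhere in $\cD$ but at the cost of the Feynman--Kac machinery and the external input \cite[Prop.~4.1]{BL17b}. The trade-off is that the paper's approach does not use symmetry of the operator and so fits its stated aim of extending to more general non-local operators, while your testing argument is tied to the self-adjoint setting. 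One point you should make explicit is the justification of the integration by parts $\int_\cD(\fL v_n)\Psi_2=\int_\cD v_n(\fL\Psi_2)$: both $v_n$ and $\Psi_2$ lie in $\cC^s(\Rd)\cap\cC_0(\cD)\subset H^s_0(\cD)$ with $\fL v_n,\fL\Psi_2\in L^\infty(\cD)$, so the identity follows from the symmetric bilinear-form representation of $\fL$, but in this paper's framework that deserves a sentence.
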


\begin{proof}
Suppose, to the contrary, that there exists a sequence $(\rho_n, u_n)_{n\in\mathbb N}$ satisfying \eqref{E-AP} with
$\rho_n\geq -\hat\rho$ and $\norm{u_n}_\infty\to \infty$. From Lemma~\ref{L3.5} it follows that $\norm{u^+_n}_\infty
=\norm{u_n}_\infty$. Define $v_n=\frac{u_n}{\norm{u_n}_\infty}$. Then
\begin{equation}\label{EL3.7A}
-L v_n=H_n(x)=\frac{1}{\norm{u_n}_\infty}\left(f(x, u_n) + \rho_n\Phi_1 + h\right)\quad \text{in}\; \cD.
\end{equation}
Since $\norm{H_n}_\infty$ is uniformly bounded by Lemmas~\ref{L3.5}-\ref{L3.6}, it follows by
\cite[Th.~2.6]{B18} that
$$
\sup_{n\geq 1}\; \norm{v_n}_{\cC^\alpha(\Rd)} < \infty.
$$
for some $\alpha>0$.
Hence we can extract a subsequence of $\seq v$, denoted by the original sequence, such that it converges to a
continuous function $v\in\cC_0(\cD)$ in $\cC(\Rd)$. Denote
$$
G_n(x)= \frac{1}{\norm{u_n}_\infty}(f(x, -u_n^-(x))+h(x)-C+ V_2(x)u_n^-(x) - \rho^-_n\Phi_1(x)).
$$
Then using \eqref{AP3} and \eqref{EL3.7A}, we get
$$
-L v_n - V_2 v_n \geq G_n\quad \text{in}\; \cD.
$$
Using Lemma~\ref{L3.1} we have that $v_n$ is a semigroup super-solution, i.e., for every $t>0$
\begin{equation}\label{EL3.7B}
v_n(x)\geq \Exp^x\left[\int_0^{t\wedge\uptau_\cD} e^{\int_0^s V_2(X_p)\D{p}} G_n(X_s)\, \D{s}\right]
+\Exp^x\left[e^{\int_0^t V_2(X_s)\, \D{s}} v_n(X_t)\Ind_{\{t<\uptau_\cD\}}\right].
\end{equation}
Letting $n\to\infty$ in \eqref{EL3.7B} and using the uniform convergence of $G_n$ and $v_n$, we obtain
\begin{equation}\label{EL3.7C}
v(x)\geq \Exp^x\left[e^{\int_0^t V_2(X_s)\, \D{s}} v(X_t)\Ind_{\{t<\uptau_\cD\}}\right]\quad
\text{for all}\; x\in\cD, \; t\geq 0.
\end{equation}
Since $\norm{v}_\infty=1$ and $v\geq 0$ in $\Rd$, it is easily seen from \eqref{EL3.7C} that $v>0$ in $\cD$.
Again, by Lemma~\ref{L3.1}, we see that
$$-L v -V_2 v\geq 0\quad \text{in}\; \cD,$$
in viscosity sense.
Hence it follows that $\lambda^*(\fL-V_2)\geq 0$, contradicting \eqref{AP1}. This
proves the first part of the result. The second part follows by Lemma~\ref{L3.6} and \eqref{EL3.70}.
\end{proof}

The following result will be useful for tackling the super-linear case. We note that the general idea of the
a priori bound below has its origins in the work of Gidas and Spruck \cite{GS81}, see also the more recent
\cite{BDGQ} for a non-local version.

\begin{lemma}\label{L2.8}
Let Assumption [AP](3') hold. Then for every $\hat\rho>0$ there exists $C_0$ such that for every solution $u$
of \eqref{E-AP}, with $\rho\geq -\hat\rho$ we have
\begin{equation}\label{EL2.8A}
\norm{u}_\infty\leq C_0 \max\left\{1, |\rho|^{\frac{1}{p}}\right\}.
\end{equation}
In particular, there exists $\rho_2>0$ such that \eqref{E-AP} does not have any solution for $\rho\geq \rho_2$.
\end{lemma}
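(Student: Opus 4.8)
The plan is to prove the a priori bound \eqref{EL2.8A} first; the non-existence statement then follows exactly as in the proof of Lemma~\ref{L3.7}, since feeding \eqref{EL2.8A} into Lemma~\ref{L3.6} gives $\rho^+\le C_3\bigl(1+C_0\max\{1,|\rho|^{1/p}\}\bigr)$, which for $p>1$ cannot hold once $\rho$ exceeds some $\rho_2>0$.

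To establish \eqref{EL2.8A} I would argue by contradiction and rescaling, in the spirit of Gidas--Spruck. Assume there is a sequence of solutions $(\rho_n,u_n)$ of \eqref{E-AP} with $\rho_n\ge-\hat\rho$ and $M_n\df\norm{u_n}_\infty>n\max\{1,|\rho_n|^{1/p}\}$. By Lemma~\ref{L3.5} the negative parts are uniformly bounded, $\norm{u_n^-}_\infty\le\Lambda$, so $M_n=\norm{u_n^+}_\infty\to\infty$ and one may pick $x_n\in\cD$ with $u_n(x_n)=M_n$. The natural scaling dictated by \eqref{AP3} and Assumption~[AP](3') is $\mu_n\df M_n^{-(p-1)/(2s)}$; set $v_n(y)\df M_n^{-1}u_n(x_n+\mu_n y)$ on $\cD_n\df\mu_n^{-1}(\cD-x_n)$, extended by zero. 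Since $\mu_n^{2s}M_n^{p-1}=1$, the function $v_n$ solves
$$
\fL v_n(y)=\frac{f(x_n+\mu_n y,\,M_nv_n(y))}{M_n^{p}}+M_n^{-p}\bigl(\rho_n\Phi_1(x_n+\mu_n y)+h(x_n+\mu_n y)\bigr),\qquad y\in\cD_n .
$$
Here $M_n^{-p}|\rho_n|<n^{-p}$ and $M_n^{-p}\norm h_\infty\to0$, so the last two terms vanish uniformly, while the at-most-$p$-th power growth of $f$ from above together with $M_nv_n\ge-\Lambda$ keeps the first term bounded in $L^\infty$ uniformly in $n$. As $0\le v_n\le 1+o(1)$, the estimates of \cite[Prop.~1.1]{RS14} give uniform interior $\cC^{s}$ bounds, so along a subsequence $v_n\to v$ locally uniformly, with $0\le v\le1$ and $v(0)=1$.

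The main work is to identify the limiting equation. Passing to $x_n\to x_\infty\in\bar\cD$ and using $f(x_n+\mu_n y,M_nv_n(y))/M_n^{p}\to a_0(x_\infty)v(y)^p$ pointwise (this is where the \emph{uniform} limit in Assumption~[AP](3') enters) together with the uniform $L^\infty$ bound to control the nonlocal tails, one is left with two cases. If $\dist(x_n,\partial\cD)/\mu_n\to\infty$, then $\cD_n\uparrow\Rd$ and $v$ is a bounded nonnegative viscosity solution of $\fL v=a_0(x_\infty)v^p$ on $\Rd$. If $\dist(x_n,\partial\cD)/\mu_n$ stays bounded, then, after flattening the $\cC^2$ boundary near $x_\infty$, $\cD_n$ converges to a half-space $H$ and $v$ solves $\fL v=a_0(x_\infty)v^p$ in $H$ with $v=0$ in $H^c$. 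In both cases $v(0)=1$, so $v\not\equiv0$; after normalising $a_0(x_\infty)$ away by a further scaling, this contradicts the Liouville theorem for the fractional Lane--Emden equation on $\Rd$, respectively on a half-space, which hold in the subcritical range $1<p<\frac{d+2s}{d-2s}$, the half-space statement using $d>1+2s$. (An alternative, covering only part of the range, would be a weighted $L^{p}$ estimate obtained by pairing \eqref{E-AP} with $\Phi_1$ and then bootstrapping, but the blow-up route handles the full subcritical range more transparently.)

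I expect the boundary case to be the main obstacle. One must verify that the rescaled domains $\cD_n$ genuinely converge to a half-space (this uses $\mu_n\to0$ and the $\cC^2$ regularity of $\partial\cD$), that the nonlocal tail contributions pass to the limit so that $v$ solves the expected equation in the viscosity sense, and --- crucially --- that centring the rescaling at the maximum point of $u_n^+$ keeps the concentration point away from $\partial H$: were it to collapse onto $\partial H$, continuity of $v$ would already clash with $v(0)=1$. Once these points and the requisite fractional Liouville theorems on $\Rd$ and on half-spaces are in place, \eqref{EL2.8A} follows, and with it the non-existence of solutions for $\rho\ge\rho_2$.
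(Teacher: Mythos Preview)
Your blow-up strategy is the same as the paper's, and the case split according to whether $\dist(x_n,\partial\cD)/\mu_n$ is unbounded or bounded, followed by the fractional Liouville theorems on $\Rd$ and on the half-space from \cite{QX14}, is exactly what the paper does. The reduction of the non-existence statement to \eqref{EL2.8A} via Lemma~\ref{L3.6} is also the same.

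The one substantive difference is in how the boundary case is prevented from collapsing, i.e.\ how one rules out $\dist(x_n,\partial\cD)/\mu_n\to 0$. You flag this as the main obstacle and propose to argue that ``continuity of $v$ would already clash with $v(0)=1$''. As written this is incomplete: you only invoke the \emph{interior} $\cC^s$ estimate \cite[Prop.~1.1]{RS14}, which gives compactness on compact subsets of the limiting half-space but says nothing at a boundary point, so you cannot yet conclude that the limit $v$ is continuous at $0$ when $0\in\partial H$. Your heuristic can be made rigorous purely analytically by using instead the boundary estimate \cite[Th.~1.2]{RS14}: since the rescaled right-hand side is uniformly bounded and the rescaled domains are uniformly $\cC^{1,1}$, one gets $|v_n(y)|\le C\,\dist(y,\partial\cD_n)^s$, and evaluating at $y=0$ forces $\dist(x_n,\partial\cD)/\mu_n\ge C^{-1/s}$.

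The paper takes a different route at this point, in keeping with its analytic--probabilistic theme: it proves the lower bound $\liminf_n \theta_n/\gamma_n\ge\kappa>0$ directly by passing to the Feynman--Kac representation (Lemma~\ref{L3.1}), inserting the exit-time estimate $\Prob^{x_n}(\uptau_\cD>\kappa_2\theta_n^{2s})\le\eta<1$ from \cite{B17}, and reading off a contradiction when $\theta_n^{2s}\norm{u_n}_\infty^{p-1}\to 0$. This avoids any appeal to boundary regularity of the rescaled problems. A further minor difference is that the paper first treats $\rho\ge 1$ and then reduces $\rho\in[-\hat\rho,1]$ to that case by absorbing $(\rho-1)\Phi_1$ into $h$, whereas you handle all $\rho$ at once via $\max\{1,|\rho|^{1/p}\}$; both are fine.
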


\begin{proof}
First we establish \eqref{EL2.8A} for all $\rho\geq 1$. Suppose, to the contrary, that there exists
$(u_n , \rho_n)_{n\in\mathbb N}$, $\rho_n\geq 1$, satisfying \eqref{E-AP} with the property that
\begin{equation}\label{EL2.8B}
\norm{u_n}_\infty \geq n \rho^{\frac{1}{p}}_n, \quad n\geq 1.
\end{equation}
This then implies that $\norm{u_n}_\infty\to \infty$ as $n\to\infty$, and
\begin{equation}\label{EL2.8D}
\norm{u_n}^{-p}_\infty \rho_n\to 0\quad \text{as}\quad n\to\infty.
\end{equation}
Let $x_n\in\cD$ be such that $u(x_n)=u^+(x_n)=\norm{u_n}_\infty$. Such a choice is possible due to Lemma~\ref{L3.5}.
Write
$$
\gamma_n=\norm{u_n}_\infty^{-\frac{p-1}{2s}} \quad \text{and}\; \quad \theta_n=\dist(x_n, \partial\cD).
$$
Using compactness, we may also assume that $x_n\to x_0\in\bar\cD$ as $n\to\infty$. We split the proof into two cases.

\noindent{\bf \emph{Case 1.}} Suppose that $\limsup_{n\to\infty}\frac{\theta_n}{\gamma_n}=+\infty$. Define $w_n(x)=
\frac{1}{\norm{u_n}_\infty} u_n(\gamma_n x+ x_n)$. We then have in $\frac{1}{\gamma_n}(\cD-x_n)$ that
\begin{equation}\label{EL2.8E}
\fL w_n = \frac{1}{\norm{u_n}^p_\infty}\big(f(\gamma_n x+ x_n, u_n(x)) + \rho_n \Phi(\gamma_n x+ x_n) +
h(\gamma_n x+ x_n)\big).
\end{equation}
We choose a subsequence, denoted is the same way, such that $\lim_{n\to\infty}\frac{\theta_n}{\gamma_n}=+\infty$.
Then for any given $k\in\NN$ there is a large enough $n_0$ satisfying $\cB_k(0)\subset \frac{1}{\gamma_n}(\cD-x_n)$
for all $n\geq n_0$. Therefore, the right hand side of \eqref{EL2.8E} is uniformly bounded in $\cB_k(0)$. Since
$\norm{w_n}=w_n(0)=1$, it follows that for some $\alpha>0$, $\norm{w_n}_{\cC^\alpha(\cB_{k/2}(0))}$ is bounded
uniformly in $n$ (see \cite{CS09}). Thus we can extract a subsequence $\seq w$ such that $w_n \to w\in \cC_{\rm b, +}
(\Rd)$ locally uniformly. Hence, by the stability of viscosity solutions
$$
\fL w= a_0(x_0) w^p \quad \text{in}\; \Rd, \quad w(0)=1.
$$
By the strong maximum principle we also have $w>0$. However, no such solution can exist due to the Liouville theorem
\cite[Th.~1.2]{QX14}, and hence we have a contradiction in this case.

\vspace{0.1cm}
\noindent{\bf \emph{Case 2.}} Suppose that $\limsup_{n\to\infty}\frac{\theta_n}{\gamma_n}<+\infty$. First we show that
for a positive constant $\kappa$
\begin{equation}\label{EL2.8C}
\liminf_{n\to\infty}\frac{\theta_n}{\gamma_n}\geq \kappa.
\end{equation}
Note that using Lemma~\ref{L3.5} and Assumption [AP](3') we can find a constant $\kappa_1$ satisfying
$$
\kappa_1(1+\norm{u_n}^{p-1}_\infty)\,\sgn(u_n) u_n \geq f(x, u_n)\quad \text{for}\; x\in\cD, \; n\geq 1.
$$
Indeed, using Assumption [AP](3') it follows that for $u_n(x)\geq \ell$, for some $\ell>0$, we have
$$
f(x, u_n(x))\leq 2 \norm{a_0}_\infty u^p_n(x)\leq 2 \norm{a_0}_\infty \norm{u_n}^{p-1}_\infty u_n(x).
$$
Then the estimate follows from the local Lipschitz property of $f$ and Lemma~\ref{L3.5}. Hence, using \eqref{E-AP}
we obtain
$$
-\fL u_n + \kappa_1(1+\norm{u_n}^{p-1}_\infty)\sgn(u_n) u_n\geq -\rho_n -\norm{h}_\infty\quad \text{in}\; \cD.
$$
Denote by $C_n=\rho_n +\norm{h}_\infty$. Applying Lemma~\ref{L3.1} we get that for $t\geq 0$,
\begin{align*}
\norm{u_n}_\infty =u_n(x_n) &\leq e^{\kappa_1(1+\norm{u_n}^{p-1}_\infty) t} \norm{u_n}_\infty \Prob^{x_n}(\uptau_\cD>t)
+ e^{\kappa_1(1+\norm{u_n}^{p-1}_\infty) t} t C_n.
\end{align*}
It follows from the proof of \cite[Th.~1.1]{B17} that there exist constants $\kappa_2$ and $\eta\in(0, 1)$, not
depending on $x_n$, such that for $t=\kappa_2 \theta_n^{2s}$ we have
$$
\Prob^{x_n}(\uptau_\cD>t)\leq \eta.
$$
Inserting this choice of $t$ in the above expression we obtain
$$
1\leq  e^{\kappa_1(1+\norm{u_n}^{p-1}_\infty) t} \left[\eta + \kappa_2 \theta_n^{2s}\frac{C_n}{\norm{u_n}_\infty}\right]
= e^{\kappa_1(1+\norm{u_n}^{p-1}_\infty) t}
\left[\eta + \kappa_2 \frac{\theta_n^{2s}}{\gamma^{2s}_n}\frac{C_n}{\norm{u_n}^p_\infty}\right].
$$
Thus by the assertion and \eqref{EL2.8D} it follows that for all large $n$ we have
$$
\kappa_1\kappa_2 \theta_n^{2s}(1+\norm{u_n}^{p-1}_\infty) \geq \log \frac{2}{\eta}.
$$
This gives \eqref{EL2.8C}, since $\theta_n\to 0$.

Hence we may assume that, up to a subsequence,
$$
\lim_{n\to\infty}\frac{\theta_n}{\gamma_n}=b\in (0, \infty)
$$
holds. Then using again an argument similar to above, we obtain a positive bounded solution
$$
\fL w= a_0(x_0) w^p \quad \text{in}\; \mathbb{R}^d_+,
$$
see, for instance, the arguments in \cite[Lem.~5.3]{LM17}. This again contradicts \cite[Th.~1.1]{QX14}.

Thus \eqref{EL2.8B} can not hold and this proves our result when $\rho\geq 1$. For the remaining case $\rho\in [-\hat\rho, 1]$,
note that we can rewrite
$$
\rho\Phi_1 + h= \Phi_1 + \tilde{h}\quad \text{where}\quad \tilde{h}=h-\Phi_1 + \rho\Phi_1.
$$
Note that $\norm{\tilde{h}}_\infty$ is uniformly bounded for $\rho\in [-\hat\rho, 1]$.
Then \eqref{EL2.8B} follows from the previous argument. The other claim follows by \eqref{EL2.8A} and Lemma~\ref{L3.6}.
\end{proof}

With the above results in hand, we can now proceed to prove Theorem~\ref{T-AP}. Define
$$
\cA=\big\{\rho\in\RR\; :\; \eqref{E-AP}\; \text{has a viscosity solution}\big\}.
$$
By Lemma~\ref{L3.4} we have that $\cA \neq \emptyset$, and Lemma~\ref{L3.7} and ~\ref{L2.8} imply that $\cA$ is bounded
from above. Define
$$\rho^*=\sup\cA.$$ Note that if $\rho'<\rho^*$, then $\rho'\in\cA$. Indeed, there is $\tilde{\rho}
\in (\rho', \rho^*)\cap\cA$ and the corresponding solution $u^{(\tilde{\rho})}$ of \eqref{E-AP} with $\rho=\tilde{\rho}$
is a super-solution at level $\rho'$, i.e.,
$$
-L u^{(\tilde{\rho})}\geq  f(x, u^{(\tilde{\rho})}) + \rho' \Phi_1 + h(x) \quad \text{in}\; \cD,
\quad \text{and} \quad u=0\quad \text{in}\; \cD^c.
$$
Using Lemma~\ref{L3.3}(3) and from the proof of Lemma~\ref{L3.4} we have a minimal solution of \eqref{E-AP} with $\rho=\rho'$. Next we show
that there are at least two solutions for $\rho< \rho^*$.

Recall that $d:\bar\cD\to [0, \infty)$ is the distance function from the boundary of $\cD$. We can assume that
$d$ is a positive $\cC^1$-function in $\cD$. For a sufficiently small $\varepsilon>0$, to be chosen later, consider
the Banach space
$$
\mathfrak X=\left\{\psi\in \cC_0(\cD)\; :\; \left\|{\frac{\psi}{d^s}}\right\|_{\cC^\varepsilon(\cD)} < \infty\right\}.
$$
In fact, it is sufficient to consider any $\varepsilon$ strictly smaller than the parameter $\alpha < s \wedge (1-s)$
in \cite[Th.~1.2]{RS17}. Since $d^s$ is $s$-H\"{o}lder continuous in $\bar\cD$, it is routine to check that $\mathfrak X
\subset\cC^\varepsilon(\bar\cD)$.

For $\rho\in\RR$ and $m\geq 0$ we define a map $K_\rho:\mathfrak X\to\mathfrak X$ as follows. For $v\in\mathfrak X$,
$K_\rho v =u$ is the unique viscosity solution (see Theorem~\ref{T2.1}(b)) to the Dirichlet problem
$$
-L u + m u = f(x, v) + \rho \Phi_1 + h(x)+ mv \quad \text{in}\; \cD, \quad \text{and} \quad u=0\quad \text{in}\; \cD^c.
$$
It follows from \cite[Th.~1.2]{RS17} that $u\in\mathfrak X$.
\begin{lemma}\label{L3.8}
Let $\rho<\rho^*$. Then there exist $m\geq 0$ and an open $\cO \subset \mathfrak X$, containing the minimal solution,
satisfying $\Deg(I-K_\rho, \cO, 0)=1$.
\end{lemma}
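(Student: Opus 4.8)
The plan is to realise $K_\rho$ as a compact, order-preserving map on $\mathfrak X$ and to compute $\Deg(I-K_\rho,\cO,0)$ on a bounded open order interval by a truncation argument together with a homotopy to the zero map, in the spirit of the degree-theoretic scheme of \cite{DF80}.

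\emph{Setting up the map and an enclosing sub/supersolution pair.}
Since $\rho<\rho_0$, pick $\tilde\rho\in(\rho,\rho_0)$ and let $\bar u$ be the minimal solution of \eqref{E-AP} at level $\tilde\rho$; because $\Phi_1>0$ in $\cD$, $\bar u$ is a (classical) supersolution of \eqref{E-AP} at level $\rho$. Let $\underline u$ be the minimal solution of \eqref{E-AP} at level $\rho$ (it exists by the discussion preceding the lemma), and let $\underline u_*$ be the subsolution of Lemma~\ref{L3.3}(1); inspecting that construction (at the cost of enlarging one constant in it) we may take $\underline u_*$ to be a \emph{strict} subsolution, $\fL\underline u_*\le f(x,\underline u_*)+\rho\Phi_1+h-\delta$ in $\cD$ for some $\delta>0$, with $\underline u_*\le\underline u\le\bar u$ in $\Rd$ by Lemma~\ref{L3.3}(3). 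Fix $m\ge 0$ so large that $q\mapsto f(x,q)+mq$ is nondecreasing on $[\min_{\bar\cD}\underline u_*,\max_{\bar\cD}\bar u]$ uniformly in $x$ (possible by Assumption~[AP](1)). Since $\lambda^*(\fL+m)\ge\lambda^*_0>0$, the map $K_\rho$ is well defined on $\mathfrak X$ by Theorem~\ref{T2.1}(b); it is compact because \cite[Th.~1.2]{RS14} bounds $\norm{K_\rho v/d^s}_{\cC^\alpha(\bar\cD)}$ by $\norm{v}_\infty$ on bounded subsets of $\mathfrak X$ with $\alpha>\varepsilon$; and the monotone choice of $m$ together with Theorem~\ref{T2.2} (potential $m$) gives $K_\rho v_1\le K_\rho v_2$ for $\underline u_*\le v_1\le v_2\le\bar u$, as well as $K_\rho\underline u_*\ge\underline u_*$ and $K_\rho\bar u\le\bar u$.

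\emph{The degree computation.}
Let $R(v)=\max\{\underline u_*,\min\{v,\bar u\}\}$ pointwise and $\hat K_\rho v=K_\rho(R(v))$. Then $\hat K_\rho$ is compact and its range lies in one fixed bounded subset of $\mathfrak X$, so on a sufficiently large ball $B\subset\mathfrak X$ the homotopy $(t,v)\mapsto t\hat K_\rho v$ is admissible and $\Deg(I-\hat K_\rho,B,0)=1$. Any fixed point $v=\hat K_\rho v$ satisfies $\underline u_*\le K_\rho\underline u_*\le v\le K_\rho\bar u\le\bar u$ by monotonicity, hence $R(v)=v$, so $v$ solves \eqref{E-AP} at level $\rho$ and $v\ge\underline u$ by minimality. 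A Hopf-type comparison (testing $v-\underline u_*$, $\underline u-\underline u_*$ and $\bar u-v$ against small multiples of $\Phi_1$, using the strictness $\delta$, the local Lipschitz bound on $f$, the fractional Hopf lemma \cite[Th.~2.4]{B18} and Theorem~\ref{T2.2}) yields $\underline u-\underline u_*\ge c\,d^s$ and $\bar u-v\ge c\,d^s$ in $\cD$, and with \cite[Th.~1.2]{RS14} this places $\underline u$ and every fixed point of $\hat K_\rho$ in the $\mathfrak X$-interior of $[\underline u_*,\bar u]$, i.e.\ in $\{w:\underline u_*\ll w\ll\bar u\}$, where $\psi_1\ll\psi_2$ means $\psi_2-\psi_1$ lies in the interior of the positive cone of $\mathfrak X$. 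Put $\cO=B\cap\{w:\underline u_*\ll w\ll\bar u\}$: this is bounded, open, contains $\underline u$ and all fixed points of $\hat K_\rho$, and has none of them on $\partial\cO$; by excision $\Deg(I-\hat K_\rho,\cO,0)=\Deg(I-\hat K_\rho,B,0)=1$. Finally $\bar\cO\subset[\underline u_*,\bar u]$, on which $R$ is the identity, so $K_\rho=\hat K_\rho$ on $\bar\cO$, while \eqref{E-AP} has no solution on $\partial\cO$; therefore $\Deg(I-K_\rho,\cO,0)=\Deg(I-\hat K_\rho,\cO,0)=1$, with $\underline u\in\cO$, which is the assertion.

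\emph{The main obstacle.}
The degree/truncation bookkeeping is routine once the right topology is in place; the genuinely delicate point is to manufacture an $\cO$ that simultaneously contains the minimal solution in its interior, carries no spurious fixed point on its boundary, and on whose closure $K_\rho$ agrees with its truncation. This is exactly why the argument must be run in the weighted space $\mathfrak X$, whose positive cone is solid precisely because its elements decay like $d^s$ at $\partial\cD$: only in $\mathfrak X$ can the pointwise inequalities $\underline u_*\le\underline u\le\bar u$ be promoted to the strict relations $\underline u_*\ll\underline u\ll\bar u$ required by the excision step, and the boundary regularity \cite[Th.~1.2]{RS14} together with the fractional Hopf lemma \cite[Th.~2.4]{B18} are the tools effecting this promotion. (The verification that $\hat K_\rho$ has bounded range is the same in the sublinear and superlinear regimes, since bounded subsets of $\mathfrak X$ have bounded sup-norm.)
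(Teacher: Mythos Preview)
Your proof is correct and follows essentially the same scheme as the paper's: build a strict sub/supersolution pair, choose $m$ so that $q\mapsto f(x,q)+mq$ is monotone on the resulting order interval, truncate to obtain a compact map with bounded range, compute degree $1$ by a contractible homotopy, and use the fractional Hopf lemma in the weighted space $\mathfrak X$ to place all fixed points (including the minimal solution) strictly inside the order interval. The only cosmetic differences are that the paper cites \cite{GS16} rather than \cite{B18} for the Hopf step, and that the paper shows directly that the truncated map $\tilde K_\rho$ sends all of $\mathfrak X$ into $\cO$ (so degree $1$ follows from convexity of $\cO$), whereas you first compute degree $1$ on a large ball and then excise down to $\cO$; both routes are standard and equivalent here.
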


\begin{proof}
We borrow some of the arguments of \cite{DF80} with a suitable modification. Pick $\bar{\rho}\in (\rho, \rho^*)$ and
let $\bar{u}$ be a solution of \eqref{E-AP} with $\rho=\bar{\rho}$. It then follows that
$$
-L \bar{u}  > f(x, \bar{u}) + \rho \Phi_1 + h(x) \quad \text{in}\; \cD \quad \text{and} \quad u=0\quad \text{in}\; \cD^c.
$$
and by Lemma~\ref{L3.3}(i) we have a classical subsolution
$$
-L \underline{u}  < f(x, \underline{u}) + \rho \Phi_1 + h(x) \quad \text{in}\; \cD \quad \text{and} \quad u=0\quad
\text{in}\; \cD^c.
$$
Then Lemma~\ref{L3.3}(3) supplies $\underline{u}\leq\bar{u}$ in $\Rd$, hence the minimal solution $u$ of
\eqref{E-AP} satisfies $\underline{u}\leq u\leq\bar{u}$ in $\Rd$. Note that for every $\psi\in\mathfrak X$, the ratio
$\frac{\psi}{d^s}$ is continuous up to the boundary. Define
$$
\cO=\left\{\psi\in\mathfrak X\; :\; \underline{u}<\psi<\bar{u}\; \text{in}\; \cD, \;\; \frac{\underline{u}}{d^s}<
\frac{\psi}{d^s} < \frac{\bar u}{d^s}\; \text{on}\; \partial\cD, \;\; \norm{\psi}_{\mathfrak X}< r\right\},
$$
where the value of $r$ will be chosen conveniently below. It is evident that $\cO$ is bounded, open and convex. Also,
if we choose $r$ large enough, then the minimal solution $u$ belongs to $\cO$. Indeed, note that
$$
-L(u-\underline{u}) + \left(\frac{f(x, u)-f(x, \underline{u})}{u-\underline{u}}\right)^+(u-\underline{u})\geq 0
\quad \text{in}\; \cD.
$$
By strong maximum principle it follows that $\underline{u}<u$ in $\cD$.
Hence by \cite[Lem.~1.2]{GS16} we have
$$
\min_{\partial\cD}\left(\frac{u}{d^s}-\frac{\underline{u}}{d^s}\right)>0.
$$
The results in \cite{GS16} are proved for $\fL$ (i.e. $k$ constant), but the similar argument works with the barrier function constructed in
\cite[Lem.~3.4]{RS16} giving us Hopf's lemma in our setting.
Similarly, we can compare also $u$ and $\bar{u}$.

We define $m$ to be a Lipschitz constant of $f(x, \cdot)$ in the interval $[\min\underline{u}, \max\bar{u}]$. Also, define
\[
\tilde{f}(x, q)= f \left(x, (\underline{u}(x)\vee q)\wedge\bar{u}(x)\right) + m (\underline{u}(x)\vee q)\wedge\bar{u}(x).
\]
Note that $f$ is bounded and Lipschitz continuous in $q$, and also non-decreasing in $q$. We define another map $\tilde
K_\rho: \mathfrak X \to \mathfrak X$ as follows: for $v\in\mathfrak X$, $\tilde K_\rho v=u$ is the unique viscosity
solution of
\begin{equation}\label{EL3.8A}
-L u + m u = \tilde{f}(x, v) + \rho\Phi + h  \quad \text{in}\; \cD, \quad \text{and} \quad u=0\quad \text{in}\; \cD^c.
\end{equation}
It is easy to check that $K_\rho$ is a compact mapping. Using again \cite[Th.~1.2]{RS16}, we find $r$ satisfying
$$
\sup\left\{\norm{\tilde K_\rho v}_{\mathfrak X}\; :\; v\in\mathfrak X\right\} < r.
$$
We fix this choice of $r$. Using the regularity estimate of \cite{Serra15}, we see that the solution $u$ in \eqref{EL3.8A}
is in $\cC^{2s+}(\cD)$. Therefore,
\begin{align*}
-L(u-\underline{u}) + m(u-\underline{u})
&> \tilde{f}(x, v) -m\underline{u} - f(x, \underline{u})
\geq \tilde{f}(x, \underline{u}) -m\underline{u} - f(x, \underline{u})=0.
\end{align*}
Hence by \cite[Th.~2.1, Lem~1.2]{GS16} we have $\underline{u}<u$ in $\cD$ and
$$
\min_{\partial\cD}\left(\frac{u}{d^s}-\frac{\underline{u}}{d^s}\right)>0.
$$
The other estimates can be obtained similarly. Finally, this implies that $\tilde K_\rho v\in \cO$, for all $v\in\mathfrak X$.
Moreover, $0\notin (I-\tilde K_\rho)(\partial\cD)$. Then by the homotopy invariance property of degree we find that
$\Deg(I-\tilde K_\rho,\cO, 0)=1$. Since $\tilde K_\rho$ coincides with $K_\rho$ in $\cO$, we obtain $\Deg(I-K_\rho, \cO, 0)=1$.
\end{proof}

Similarly as before, define $\cS_\rho:\mathfrak X\to\mathfrak X$ such that for $v\in\mathfrak X$, $u=\cS_\rho v$ is given
by the unique solution of
$$
-L u = f(x, v) + \rho \Phi_1 + h(x) \quad \text{in}\; \cD, \quad \text{and} \quad u=0\quad \text{in}\; \cD^c.
$$
Then the standard homotopy invariance of degree gives that $\Deg(I-\cS_\rho, \cO, 0)=1$. This observation will be helpful
in concluding the proof below.

\begin{proof}[Proof of Theorem~\ref{T-AP}]
Using Lemma~\ref{L3.8} we can now complete the proof by using \cite{DF80,DFS}. Recall the map $\cS_\rho$ defined above, and
fix $\rho<\rho^*$. Denote by $\cO_R$ a ball of radius $R$ in $\mathfrak X$. From Lemmas~\ref{L3.7} and ~\ref{L2.8} we find that
$$
\Deg(I-\cS_{\tilde \rho}, \cO_R, 0)=0\quad \text{for all}\;\, R>0, \; \tilde{\rho}\geq \rho_2.
$$
Using again Lemmas~\ref{L3.7}, ~\ref{L2.8} and \cite[Th.~1.2]{RS17}, we obtain that for every $\hat\rho$ there exists a constant
$R$ such that
$$
\norm{u}_{\mathfrak X} < R
$$
for each solution $u$ of \eqref{E-AP} with $\tilde{\rho}\geq -\hat\rho$. Fixing $\hat\rho>\abs{\rho}$ and the corresponding
choice of $R$, it then follows from homotopy invariance that $\Deg(I-\cS_\rho, \cO_R, 0)=0$. We can choose $R$ large enough so
that $\cO\subset\cO_R$. Since $\Deg(I-\cS_{\rho}, \cO, 0)=1$, as seen above, using the excision property we conclude that
there exists a solution of \eqref{E-AP} in $\cO_R\setminus\cO$. Hence for every $\rho<\rho^*$ there exist at least two
solutions of \eqref{E-AP}. The existence of a solution at $\rho=\rho^*$ follows from the a priori estimates in Lemmas~\ref{L3.7}
and ~\ref{L2.8}, the estimate in \cite[Th.~2.6]{B18}, and the stability property of the viscosity solutions. This completes
the proof of
Theorem~\ref{T-AP}.
\end{proof}

\subsection*{Acknowledgments}
This research of AB was supported in part by an INSPIRE faculty fellowship and a DST-SERB grant EMR/2016/004810.

\end{document}